\documentclass{amsart}
\usepackage{ amsmath, amsthm, amsfonts, hyperref, graphicx, ifpdf}
\usepackage[dvipsnames,usenames]{color}
\hypersetup{
   bookmarks=true,         
   unicode=false,          
   pdftoolbar=true,        
   pdfmenubar=true,        
   pdffitwindow=false,     
   pdfstartview={},    
   pdftitle={},    
   pdfauthor={Longzhi Lin},     
   pdfsubject={},   
   pdfcreator={Longzhi Lin},   
   pdfproducer={Longzhi Lin}, 
   pdfkeywords={}, 
   pdfnewwindow=true,      
   colorlinks=true,       
   linkcolor=blue,          
   citecolor=blue,        
   filecolor=magenta,      
   urlcolor=MidnightBlue          
}

\newcommand{\N}{{\mathbb N}}

\newtheorem{theorem}{Theorem}[section]
\newtheorem{lemma}[theorem]{Lemma}
\newtheorem{proposition}[theorem]{Proposition}
\newtheorem{corollary}[theorem]{Corollary}

\theoremstyle{definition}
\newtheorem{definition}[theorem]{Definition}

\theoremstyle{remark}
\newtheorem{remark}[theorem]{Remark}

\numberwithin{equation}{section}



\parskip1ex


\begin{document}
\title[Estimates for the energy density]{Estimates for the energy density of critical points of a class of conformally invariant variational problems}
\author{Tobias Lamm}
\address[T.~Lamm]{Institut f$\ddot{\text{u}}$r Mathematik\\Goethe-Universit$\ddot{\text{a}}$t Frankfurt\\ Robert-Mayer-Str. 10\\D-60054 Frankfurt, Germany}
\email{lamm@math.uni-frankfurt.de}
\author{Longzhi Lin}
\address[L.~Lin]{Department of Mathematics\\Rutgers University\\110 Frelinghuysen Road\\Piscataway, NJ 08854-8019\\USA}
\email{lzlin@math.rutgers.edu}
\date{\today}

\begin{abstract}
We show that the energy density of critical points of a class of conformally invariant variational problems with small energy on the unit $2$-disk $B_1\subset\mathbb{R}^2$ lies in the local Hardy space $h^1(B_1)$. As a corollary we obtain a new proof of the energy convexity and uniqueness result for weakly harmonic maps with small energy on $B_1$.
\end{abstract}

\maketitle

\section{Introduction}
Let $\mathcal{N}$ be a closed (i.e. compact and without boundary) $C^2$ Riemannian submanifold of $\mathbb{R}^n$. The \textit{Dirichlet} energy of a $W^{1,2}$ map $u: B_1\to \mathcal{N}$ from the $2$-dimensional unit disk $B_1=B_1(0)\subset \mathbb{R}^2$ is defined by
\begin{equation}
\label{energy} E(u)\,=\,\frac{1}{2}\,\int_{B_1}\,|\nabla u|^2 dx\,.
\end{equation}
A weakly harmonic map $u$ from $B_1$ into $\mathcal{N}\hookrightarrow \mathbb{R}^n$ is a map in $W^{1,2}(B_1, \mathbb{R}^n)$, which takes values almost everywhere in $\mathcal{N}$ and solves the Euler-Lagrange equation
\begin{equation}\label{EL}
(\Delta u)^{\top} \,=\, 0\,,
\end{equation}
where $u=(u^1,...,u^n), \Delta= \sum_{i=1}^2\frac{\partial^2}{\partial x_i^2}$ is the Laplacian in $\mathbb{R}^2$ and the superscript $\top$ denotes the tangential part of a vector. The Euler-Lagrange equation \eqref{EL} can equivalently be written as
\begin{equation}\label{HM}
-\Delta u\,=\, A(u)(\nabla u, \nabla u)\,,
\end{equation}
where $A(u)$ is the second fundamental form of $\mathcal{N}\hookrightarrow\mathbb{R}^n$ at the point $u$. We point out that it is not important that the source domain $B_1\subset \mathbb{R}^2$ is flat in this work. Indeed, using the conformal invariance of \eqref{energy} resp. \eqref{HM}, we could instead work on any domain that is conformally equivalent to $B_1$, for example, a simply-connected open subset of any general Riemannian surface.

Using a so called Coulomb or moving frame, H\'elein (see e.g. \cite{He1}) proved the interior regularity of weakly harmonic maps. Later, Qing \cite{Q} showed continuity up to the boundary in the case of continuous boundary data based on H\'elein's technique. More recently, Rivi\`ere \cite{Riv1} succeeded in writing the Euler-Lagrange equation of every conformally invariant Lagrangian (which includes \eqref{HM}) in the form:
\begin{equation}\label{CIPDE1}
-\Delta u^i\,=\, \Omega^i_{j}\cdot \nabla u^j \quad i=1,2,...,n \quad\text{or}\quad -\Delta u\,=\, \Omega\cdot \nabla u\,,
\end{equation}
with $\Omega=(\Omega^i_j)_{1\leq i,j\leq n}\in L^2(B_1, so(n)\otimes\wedge^1\mathbb{R}^2)$, i.e. $\Omega^i_{j}=-\Omega^j_{i}$. Here and throughout the paper, the Einstein summation convention is used. In particular, this special form of the nonlinearity enabled Rivi\`ere to obtain a conservation law for this system of PDE's (see \eqref{AandB2} below), which is accomplished via a technique that we call Rivi\`ere's gauge decomposition, see Section \ref{RiviereGauge}. Rivi\`ere's gauge decomposition will also be the main tool of our work.

Equation \eqref{CIPDE1} generalizes a number of interesting equations, including the harmonic map equation \eqref{HM}), the $H$-surface equation and, more generally, the Euler-Lagrange equation of any conformally invariant elliptic Lagrangian which is quadratic in the gradient. We remark that the harmonic map equation \eqref{HM} can be written in the form of \eqref{CIPDE1} if we set
\begin{equation}\label{SFF}
\Omega^i_j:=\,[A^i(u)_{j,l}-A^j(u)_{i,l}]\nabla u^l\,.
\end{equation}
Using the conservation law mentioned above, Rivi\`ere proved the (interior) continuity of any $W^{1,2}$ weak solution $u$ of \eqref{CIPDE1}.

The underlying reason for Rivi\`ere's argument to work is that equation \eqref{CIPDE1} can be rewritten in the form
\begin{equation}\label{TEMPLL3}
\text{div}(A\nabla u)\,=\,\nabla^\perp B \cdot\nabla u\,,
\end{equation}
where $A \in L^{\infty}\,\cap\, W^{1,2}(B_1, Gl_n(\mathbb{R}))$ and $B\in W_0^{1,2}(B_1, M_n(\mathbb{R}))$. Here and in what follows we let $\nabla = (\partial_{x}, \partial_{y})$ be the gradient and $\nabla^{\perp} = (-\partial_{y}, \partial_{x})$ denote the orthogonal gradient (i.e., $\nabla^{\perp}$ is the $\nabla$-operator rotated by $\pi/2$). The right hand side of this new equation \eqref{TEMPLL3} lies in the Hardy space $\mathcal{H}^1$ by a result of Coifman, Lions, Meyer and Semmes \cite{CLMS}. Moreover, using a Hodge decomposition argument, one can show that $u$ lies locally in $W^{2,1}$ which embeds into $C^0$ in two dimensions. For details see Section \ref{RiviereGauge}. The key to this fact is a special ``compensation phenomenon'' for Jacobian determinants (e.g. the right hand side of \eqref{TEMPLL3}), which was first observed by Wente \cite{W}, see also Lemma \ref{Wen}. These Wente type estimates have many interesting applications, see e.g. \cite{W,BC,Ta2,CLMS,He1,Riv1,Riv2}. Recently, Sharp and Topping \cite{ST}, proved some interesting interior estimates for solutions of \eqref{CIPDE1} and they even obtained a compactness result for sequences of solutions.

A similar approach was used by Rivi\`ere and the first author \cite{LamR} in order to show the regularity of solutions of certain fourth order systems of PDE's which include intrinsic and extrinsic biharmonic maps.

Another interesting question is the uniqueness of weakly harmonic maps. Colding and Minicozzi recently showed the following energy convexity result for weak solutions of \eqref{HM} with small energy on $B_1$.
\begin{theorem}[\cite{CM1}]\label{THM1}
There exists constant $\varepsilon_0>0$ depending only on $\mathcal{N}$ such that if $u, v \in W^{1,2}(B_1, \mathcal{N})$ with $u|_{\partial B_1}=v|_{\partial B_1}, E(u)\leq\varepsilon_0$, and $u$ is weakly harmonic, then we have the energy convexity
\begin{equation}\label{Convexity1}
\frac{1}{2}\int_{B_1} |\nabla v-\nabla u|^2\,\leq\, \int_{B_1} |\nabla v|^2-\int_{B_1}|\nabla u|^2\,.
\end{equation}
\end{theorem}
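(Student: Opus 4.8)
The plan is to derive Theorem~\ref{THM1} from the assertion established in this paper---that the energy density $|\nabla u|^2$ of a weakly harmonic map with small energy lies in the local Hardy space $h^1(B_1)$, with $h^1$-norm controlled \emph{quadratically} by the energy---combined with the classical fact that the second fundamental form of $\mathcal{N}$ measures the deviation of $\mathcal{N}$ from its tangent planes. No regularity of $u$ beyond $W^{1,2}$ will be used, which is what makes this a genuinely different proof. First I would set $w=v-u$. Since $u|_{\partial B_1}=v|_{\partial B_1}$ we have $w\in W^{1,2}_0(B_1,\mathbb{R}^n)$, and since $u$ and $v$ take values in the compact set $\mathcal{N}$ the map $w$ is bounded, $\|w\|_{L^\infty}\le\diam(\mathcal{N})$. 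Expanding the square gives the identity
\[
\int_{B_1}|\nabla v|^2-\int_{B_1}|\nabla u|^2-\frac12\int_{B_1}|\nabla v-\nabla u|^2 \;=\; 2\int_{B_1}\nabla u\cdot\nabla w+\frac12\int_{B_1}|\nabla w|^2 ,
\]
so that \eqref{Convexity1} is equivalent to the estimate $\bigl|\int_{B_1}\nabla u\cdot\nabla w\bigr|\le\frac14\int_{B_1}|\nabla w|^2$.

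Next, since $w$ is a bounded element of $W^{1,2}_0(B_1,\mathbb{R}^n)$, it is an admissible test function in the weak formulation of \eqref{HM}: approximating $w$ by uniformly bounded maps in $C_c^\infty(B_1,\mathbb{R}^n)$ converging to $w$ in $W^{1,2}$ and almost everywhere, and using that $A(u)(\nabla u,\nabla u)\in L^1(B_1)$, one gets
\[
\int_{B_1}\nabla u\cdot\nabla w \;=\; \int_{B_1}A(u)(\nabla u,\nabla u)\cdot w\,dx .
\]
For almost every $x$ the vector $A(u)(\nabla u,\nabla u)(x)$ is normal to $T_{u(x)}\mathcal{N}$, so only the part of $w(x)=v(x)-u(x)$ normal to $T_{u(x)}\mathcal{N}$ contributes. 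Because $\mathcal{N}$ is a compact $C^2$ submanifold, $|\langle q-p,\nu\rangle|\le C_{\mathcal{N}}|q-p|^2$ for all $p,q\in\mathcal{N}$ and all unit normals $\nu$ at $p$ (the $C^2$ graph estimate when $|q-p|$ is small, and compactness when $|q-p|$ is bounded away from $0$); together with $|A(u)(\nabla u,\nabla u)|\le C_{\mathcal{N}}|\nabla u|^2$ this gives the pointwise bound $|A(u)(\nabla u,\nabla u)\cdot w|\le C_{\mathcal{N}}|\nabla u|^2|w|^2$ a.e.\ in $B_1$, hence $\bigl|\int_{B_1}\nabla u\cdot\nabla w\bigr|\le C_{\mathcal{N}}\int_{B_1}|\nabla u|^2|w|^2\,dx$.

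It remains to estimate $\int_{B_1}|\nabla u|^2|w|^2$. By the main theorem, $E(u)\le\varepsilon_0$ forces $|\nabla u|^2\in h^1(B_1)$ with $\||\nabla u|^2\|_{h^1(B_1)}\le C\|\nabla u\|_{L^2(B_1)}^2\le C\varepsilon_0$. Pairing against $|w|^2$ by $h^1$--$\mathrm{BMO}$ duality and invoking the two-dimensional estimate $\||w|^2\|_{\mathrm{BMO}}\le C\|\nabla w\|_{L^2(B_1)}^2$ for $w\in W^{1,2}_0(B_1)$ (a Trudinger--Moser flavored inequality: extend $w$ by zero, and combine $\|w\|_{\mathrm{BMO}}\le C\|\nabla w\|_{L^2}$ with the quadratic exponential integrability of $w$), we obtain
\[
\Bigl|\int_{B_1}\nabla u\cdot\nabla w\Bigr| \;\le\; C_{\mathcal{N}}\,C\,\varepsilon_0\,\|\nabla w\|_{L^2(B_1)}^2 .
\]
Choosing $\varepsilon_0$ small enough that $C_{\mathcal{N}}C\varepsilon_0\le\frac14$ yields \eqref{Convexity1}. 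The uniqueness statement is then immediate: if $v$ is also weakly harmonic with $E(v)\le\varepsilon_0$, apply \eqref{Convexity1} with the roles of $u$ and $v$ interchanged and add the two inequalities to force $\int_{B_1}|\nabla v-\nabla u|^2\le0$, i.e.\ $u=v$.

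The routine parts are the algebraic reduction and the pointwise geometric bound. The main obstacle is the final step: one needs the Hardy-space norm of $|\nabla u|^2$ to scale \emph{quadratically} in the energy---so that the smallness of $E(u)$ is exactly what produces a constant strictly below $1/4$---which is precisely the compensation content of the main theorem; and one needs the sharp two-dimensional estimate $\||w|^2\|_{\mathrm{BMO}}\le C\|\nabla w\|_{L^2}^2$, which must be argued with care since a $W^{1,2}$ function in two dimensions need not be bounded and a product of $\mathrm{BMO}$ functions need not lie in $\mathrm{BMO}$.
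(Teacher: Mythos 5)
Your reduction is exactly the paper's: writing $w=v-u$, testing the equation with $w$, and using $|(p-q)^{\perp}|\le C_{\mathcal N}|p-q|^2$ to arrive at $\left|\int_{B_1}\nabla u\cdot\nabla w\right|\le C_{\mathcal N}\int_{B_1}|\nabla u|^2|w|^2$ is the content of Section \ref{Sec1}, and invoking Theorem \ref{THMTL} for the $h^1$ bound on $|\nabla u|^2$ is the intended use of the main theorem. The gap is in your final step: the inequality $\||w|^2\|_{\mathrm{BMO}}\le C\|\nabla w\|_{L^2}^2$ is false for general $w\in W^{1,2}_0(B_1)$. Take $M=\sqrt{\log(1/r)}$, let $w_1(x)=M\min\{1,\log(1/|x|)/\log(1/r)\}$ (a logarithmic plateau of height $M$ on $B_r$, vanishing on $\partial B_1$, with $\|\nabla w_1\|_{L^2}^2=2\pi$), let $w_2(x)=\phi(x/r)$ for a fixed bump $\phi$ equal to $1$ on $B_{1/2}$ and supported in $B_1$, and set $w=w_1+w_2$. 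Then $\|\nabla w\|_{L^2}^2=O(1)$ uniformly in $r$, but on $B_r(0)$ one has $w=M+\phi(x/r)$, so $|w|^2=M^2+2M\phi(x/r)+\phi(x/r)^2$ has mean oscillation at least $cM-C\to\infty$ as $r\to0$. Exponential integrability of $w$ and $\|w\|_{\mathrm{BMO}}\le C\|\nabla w\|_{L^2}$ do not repair this: the square of a BMO function need not be BMO, and the obstruction is exactly a high plateau multiplying a unit-energy oscillation at a small scale. In your situation $w=v-u$ is bounded by $\operatorname{diam}(\mathcal N)$, which excludes this example, but boundedness only gives $\fint_B||w|^2-(|w|^2)_B|\le 2\|w\|_{L^\infty}\fint_B|w-w_B|$, i.e. $\||w|^2\|_{\mathrm{BMO}}\le C\operatorname{diam}(\mathcal N)\|\nabla w\|_{L^2}$, a bound \emph{linear} in $\|\nabla w\|_{L^2}$. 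That yields $\left|\int_{B_1}\nabla u\cdot\nabla w\right|\le C\varepsilon_0\|\nabla w\|_{L^2}$, which cannot be absorbed into $\tfrac14\int_{B_1}|\nabla w|^2$ precisely in the regime $\|\nabla w\|_{L^2}$ small, the regime needed for convexity and uniqueness.

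This is where the paper argues differently, and you should substitute its argument for your duality step. From $|\nabla u|^2\in h^1(B_1)$, Theorem \ref{CDSTHM} produces $\psi\in W^{1,2}_0\cap L^{\infty}(B_1)$ with $\Delta\psi=|\nabla u|^2$ and $\|\psi\|_{L^{\infty}}\le C\||\nabla u|^2\|_{h^1(B_1)}\le C\varepsilon_0$; then the Colding--Minicozzi computation in Lemma \ref{GradEst1} writes $\int_{B_1}|w|^2|\nabla u|^2=\int_{B_1}|w|^2\Delta\psi$ and integrates by parts twice, using $|\nabla|w|^2|\le 2|w||\nabla w|$ and keeping one factor of $|w|$ paired with $|\nabla\psi|$; the quantity $\int_{B_1}|w|^2|\nabla\psi|^2$ is then estimated self-consistently (via $\operatorname{div}(|w|^2\psi\nabla\psi)$) by $\|\psi\|_{L^{\infty}}^2\int_{B_1}|\nabla w|^2$. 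This exploitation of the product structure of $\nabla(|w|^2)$, rather than an $h^1$--$\mathrm{BMO}$ pairing, is what produces the genuinely quadratic bound $\int_{B_1}|w|^2|\nabla u|^2\le C\varepsilon_0\int_{B_1}|\nabla w|^2$. With that replacement the rest of your argument (the algebraic identity, the admissibility of $w$ as a test function, the pointwise geometric bound, and the absorption for $\varepsilon_0$ small) goes through as in the paper.
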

An immediate corollary of Theorem \ref{THM1} is the uniqueness of solutions of the Dirichlet problem for weakly harmonic maps with small energy on $B_1$.
\begin{corollary}[\cite{CM1}] \label{COR1}
There exists $\varepsilon_0>0$ such that for all weakly harmonic maps $u, v \in W^{1,2}(B_1, \mathcal{N})$ with energy $E(u),E(v)\leq\varepsilon_0$ and $u=v$ on $\partial B_1$ we have $u = v$ in $B_1$.
\end{corollary}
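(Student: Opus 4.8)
The plan is to deduce the uniqueness statement directly from the energy convexity inequality \eqref{Convexity1} of Theorem \ref{THM1}, exploiting the fact that the hypotheses of Corollary \ref{COR1} are symmetric in $u$ and $v$: both maps are weakly harmonic, both satisfy $E(u), E(v) \le \varepsilon_0$, and they share the same trace on $\partial B_1$. Hence the roles of the two maps are interchangeable in Theorem \ref{THM1}, and the idea is simply to apply that theorem twice, with the roles of $u$ and $v$ swapped, and add the resulting inequalities.

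First I would apply Theorem \ref{THM1} with $u$ as the weakly harmonic map and $v$ as the comparison map; this is admissible since $E(u)\le\varepsilon_0$ and $u|_{\partial B_1}=v|_{\partial B_1}$. This gives
\begin{equation*}
\frac{1}{2}\int_{B_1} |\nabla v-\nabla u|^2\,\le\,\int_{B_1}|\nabla v|^2-\int_{B_1}|\nabla u|^2\,.
\end{equation*}
Next I would apply Theorem \ref{THM1} a second time, now with $v$ as the weakly harmonic map (legitimate because $v$ is weakly harmonic and $E(v)\le\varepsilon_0$) and $u$ as the comparison map (legitimate because $v|_{\partial B_1}=u|_{\partial B_1}$), obtaining
\begin{equation*}
\frac{1}{2}\int_{B_1} |\nabla u-\nabla v|^2\,\le\,\int_{B_1}|\nabla u|^2-\int_{B_1}|\nabla v|^2\,.
\end{equation*}

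Adding these two inequalities, the right-hand sides cancel and we are left with $\int_{B_1}|\nabla u-\nabla v|^2\le 0$, hence $\nabla(u-v)=0$ almost everywhere in $B_1$. Consequently $u-v$ agrees almost everywhere with a constant on the connected set $B_1$. Since $u-v\in W^{1,2}(B_1,\mathbb{R}^n)$ has vanishing trace on $\partial B_1$, this constant must be zero; equivalently, the Poincaré inequality for functions with zero boundary trace gives $\|u-v\|_{W^{1,2}(B_1)}\le C\|\nabla(u-v)\|_{L^2(B_1)}=0$. Therefore $u=v$ in $B_1$, with $\varepsilon_0$ the constant from Theorem \ref{THM1}.

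There is no genuine obstacle here: the entire substantive content is contained in Theorem \ref{THM1}, and the only point meriting a word of care is the final elementary step, namely that a $W^{1,2}$ map with vanishing gradient and vanishing boundary trace must vanish identically — which is classical. The argument uses the full symmetric energy bound $E(u),E(v)\le\varepsilon_0$ of Corollary \ref{COR1}, precisely because the second application of Theorem \ref{THM1} requires $E(v)\le\varepsilon_0$.
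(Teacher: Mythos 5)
Your proposal is correct and is exactly the argument the paper intends when it calls the uniqueness an ``immediate corollary'' of Theorem \ref{THM1}: apply the energy convexity twice with the roles of $u$ and $v$ interchanged, add the two inequalities to get $\int_{B_1}|\nabla u-\nabla v|^2\le 0$, and conclude $u=v$ from the vanishing boundary trace. No gaps; the observation that the second application needs $E(v)\le\varepsilon_0$ (while the first needs only $E(u)\le\varepsilon_0$) is the right point of care.
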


The key ingredient in Colding and Minicozzi's proof of Theorem \ref{THM1} is a special Jacobian structure of the squared norm of a certain holomorphic function that H\'elein constructed in \cite{He1}. This holomorphic function bounds the energy density $|\nabla u|^2$ from above. In particular, although Colding and Minicozzi didn't use this fact explicitly, this implies that the energy density $|\nabla u|^2$ lies in the local Hardy space ${h}^1(B_1)$. This implies that for weakly harmonic maps with small energy the quantity $|\Delta u|$ ($\leq C|\nabla u|^2$ by the harmonic map equation \eqref{HM}) lies in the local Hardy space.
This improved information is the key fact in order to prove \eqref{Convexity1}.

We remark that global estimates on the whole disk are needed in order to prove results such as the energy convexity. On the other hand, without imposing any regularity on the boundary data, global estimates are very difficult to obtain. In this paper we show that a global estimate in the local Hardy space $h^1(B_1)$ (see Definition \ref{10003}, cf. \cite{Sem}) for the energy density $|\nabla u|^2$ is valid for a more general class of non-linear systems of second order elliptic PDE's. Throughout the paper $\varepsilon_0>0$ is assumed to be sufficiently small and $C$ denotes a universal constant that depends only on $\mathcal{N}$ unless otherwise stated. Our main result is the following

\begin{theorem}\label{THMTL}
Let $\mathcal{N} \hookrightarrow \mathbb{R}^n$ be a closed $C^2$ Riemannian submanifold with a $C^{2,1}$ metric. There exists a constant $\varepsilon_0>0$, depending only on $\mathcal{N}$, such that if $u\in W^{1,2}(B_1, \mathcal{N})$ with $E(u)\leq \varepsilon_0$ is a weak solution of
\begin{equation}\label{RiviereE}-\Delta u = \Omega \cdot \nabla u\end{equation}
where $\Omega=(\Omega^i_j)_{1\leq i,j\leq n}\in L^2(B_1, so(n)\otimes\wedge^1\mathbb{R}^2)$ and if
\begin{align}
&\Omega^i_j = \sum_{l=1}^n f^i_{jl} \nabla u^l + g^i_{jl} \nabla^{\perp} u^l \text{ with } f^i_{jl} = - f^j_{il}, g^i_{jl} = - g^j_{il}\,;\\
&\|f\|_{L^{\infty}(B_1)}+ \|\nabla f\|_{L^2(B_1)}+ \|g\|_{L^{\infty}(B_1)}+\|\nabla g\|_{L^2(B_1)}\leq C\,,
\end{align}
then we have
\begin{equation}\label{MainTHMEst}
|\nabla u|^2 \in h^1(B_1) \quad\quad \text{and}\quad  \quad\||\nabla u|^2 \|_{h^1(B_1)} \leq C \int_{B_1}|\nabla u|^2\leq C\varepsilon_0\,.
\end{equation}
\end{theorem}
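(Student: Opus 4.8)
The plan is to run Rivière's gauge decomposition on equation \eqref{RiviereE} and then exploit the extra algebraic structure coming from the hypotheses on $\Omega$ to realize the energy density as (a sum of) Jacobian-type quantities, which puts it in the local Hardy space. First I would recall from Section \ref{RiviereGauge} that, under the smallness assumption $E(u)\le\varepsilon_0$, one can find $P\in W^{1,2}(B_1,SO(n))$ and $\xi\in W^{1,2}_0(B_1,so(n)\otimes\wedge^2\mathbb R^2)$ solving the Coulomb gauge system so that
\begin{equation}\label{gaugeprop}
\nabla^\perp\xi \,=\, P\nabla P^{-1} + P\,\Omega\,P^{-1},
\end{equation}
together with the standard estimate $\|\nabla P\|_{L^2(B_1)}+\|\nabla\xi\|_{L^2(B_1)}\le C\|\Omega\|_{L^2(B_1)}\le C\|\nabla u\|_{L^2(B_1)}$. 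Rewriting \eqref{RiviereE} in divergence form as in \eqref{TEMPLL3}, i.e. $\dv(A\nabla u)=\nabla^\perp B\cdot\nabla u$ with $A=P$ (modulo the lower-order correction) and $B\in W^{1,2}_0$, the point is that $A\nabla u$ and $\nabla^\perp B$ are both in $W^{1,2}\cap L^2$, so their scalar product is a Jacobian and hence in $\mathcal H^1(\mathbb R^2)$ by \cite{CLMS}; a Hodge decomposition then writes $A\nabla u=\nabla a+\nabla^\perp b$ with good estimates on $a,b$.

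The heart of the matter is to pass from "$-\Delta u\in h^1$" to "$|\nabla u|^2\in h^1$". Here the extra hypothesis that $\Omega^i_j=f^i_{jl}\nabla u^l+g^i_{jl}\nabla^\perp u^l$ with the antisymmetry $f^i_{jl}=-f^j_{il}$, $g^i_{jl}=-g^j_{il}$ is essential: it forces $|\nabla u|^2$ itself to have a divergence-of-something / Jacobian structure rather than merely being dominated by one. Concretely, I would compute $\dv(u^i\nabla u^i)=|\nabla u|^2+u^i\Delta u^i=|\nabla u|^2-u^i\,\Omega^i_j\cdot\nabla u^j$, and then split the correction term using the gauge: writing $\Omega=-\nabla P\,P^{-1}+\nabla^\perp\xi\,\cdot(\text{stuff})$ from \eqref{gaugeprop}, the $\nabla^\perp\xi$ part pairs against $\nabla u^j$ to give a genuine Jacobian, while the $f,g$ structure plus antisymmetry lets the remaining terms be rearranged into products of the form $\nabla(\text{$W^{1,2}$ function})\cdot\nabla^\perp(\text{$W^{1,2}$ function})$. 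In each case the relevant factors lie in $W^{1,2}\cap L^\infty$ or in $W^{1,2}$, using the hypotheses $\|f\|_{L^\infty}+\|\nabla f\|_{L^2}+\|g\|_{L^\infty}+\|\nabla g\|_{L^2}\le C$ and the $L^\infty$-bound on $u$ coming from $u\in\mathcal N$. Applying \cite{CLMS} (for the full-space Hardy bound) together with the localization Lemma for $h^1(B_1)$ (Definition \ref{10003}) gives each summand in $h^1(B_1)$, with $h^1$-norm controlled by the product of the two $L^2$-norms of the gradients, hence by $\int_{B_1}|\nabla u|^2$.

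The estimate \eqref{MainTHMEst} then follows by assembling these pieces: $\||\nabla u|^2\|_{h^1(B_1)}\le C\big(\|\nabla u\|_{L^2}\|\nabla P\|_{L^2}+\|\nabla u\|_{L^2}\|\nabla\xi\|_{L^2}+\|\nabla u\|_{L^2}^2(\|f\|_{L^\infty}+\|g\|_{L^\infty})+\dots\big)\le C\|\nabla u\|_{L^2}^2$, where at the last step I use the gauge estimate $\|\nabla P\|_{L^2}+\|\nabla\xi\|_{L^2}\le C\|\Omega\|_{L^2}\le C\|\nabla u\|_{L^2}$ and the smallness $\|\nabla u\|_{L^2}^2\le 2\varepsilon_0$. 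The bound $C\int_{B_1}|\nabla u|^2\le C\varepsilon_0$ is then immediate from $E(u)\le\varepsilon_0$.

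**Main obstacle.** I expect the delicate point to be the boundary behavior: the Hardy-space estimate of \cite{CLMS} is a statement on $\mathbb R^2$, and here we have no control on $u|_{\partial B_1}$, so one cannot simply extend $u$. The remedy is that $\xi$ and $B$ are chosen in $W^{1,2}_0(B_1)$ (that is the content of Rivière's decomposition), so the products $\nabla^\perp\xi\cdot\nabla u$ and $\nabla^\perp B\cdot\nabla u$ can be handled by the local Hardy space $h^1(B_1)$ using an extension that preserves the null boundary trace of one factor; making this localization rigorous — i.e. showing that a Jacobian $\nabla a\cdot\nabla^\perp b$ with $b\in W^{1,2}_0(B_1)$, $a\in W^{1,2}(B_1)$ lies in $h^1(B_1)$ with the expected bound — is the technical crux and is presumably where the $C^{2,1}$ regularity of the metric (needed for enough regularity of $f,g$ in \eqref{SFF}) gets used.
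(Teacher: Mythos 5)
Your overall frame (Rivi\`ere's gauge, the conservation law, CLMS, localization to $h^1(B_1)$) matches the paper's starting point, but the step you yourself identify as the heart of the matter --- passing from the Jacobian structure of $\Delta u$ to $|\nabla u|^2\in h^1(B_1)$ --- is exactly where your argument has a genuine gap. The identity $\dv(u^i\nabla u^i)=|\nabla u|^2+u^i\Delta u^i$ does not deliver this: the term $\dv(u^i\nabla u^i)=\tfrac12\Delta|u|^2$ is just the Laplacian of a $W^{1,2}\cap L^\infty$ function with no div-curl structure, so there is no reason it lies in $h^1(B_1)$ with a bound by $\int_{B_1}|\nabla u|^2$ (that it even equals an $L^1$ function is only a restatement of the identity you are trying to exploit, so the argument is circular). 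Moreover, in the correction term $u^i\,\Omega^i_j\cdot\nabla u^j$ the piece $u^if^i_{jl}\,\nabla u^l\cdot\nabla u^j$ has its antisymmetry in the indices $(i,j)$ while the gradients are contracted in $(l,j)$; no cancellation occurs and this term is pointwise comparable to $|\nabla u|^2$ itself, so you are back where you started. Finally, even the genuinely Jacobian pieces come with variable coefficients such as $u^ig^i_{jl}$ in front, and a coefficient in $W^{1,2}\cap L^\infty$ times a Jacobian is not in general in $h^1$ with the desired estimate; pulling coefficients through the maximal function needs an extra idea.

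That extra idea, in the paper, is a pointwise inequality rather than an identity. The special structure of $\Omega$ is used to prove two improved estimates: $\|B\|_{L^\infty(B_1)}\le C\sqrt{\eps_0}$ (Proposition \ref{XI}, via a hidden Jacobian/divergence structure of $\Delta B$, Wente's lemma and Lorentz-space duality), and a small local oscillation bound $|P(y)-P(x)|\le C\sqrt{\eps_0}$ (Lemmas \ref{P2} and \ref{L21EST}, via a Jacobian structure of $\Delta P$). Together with $\|\widehat A\|_{L^\infty(B_1)}\le C\sqrt{\eps_0}$ from Theorem \ref{PCloseTOID299}, these give the frozen-coefficient bound $\tfrac14|\nabla u|^2(y)\le \nabla^\perp\eta\cdot\bigl(P^T(x)\nabla u\bigr)(y)$ for $y\in B_r(x)$, where $\nabla^\perp\eta=A\nabla u+B\nabla^\perp u$. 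Since $|\nabla u|^2\ge0$ and the test function $\phi\ge0$, the radial maximal function of $|\nabla u|^2$ is then dominated by that of a constant-coefficient combination of the Jacobians $\nabla^\perp\eta^i\cdot\nabla u^j$, which lie in $h^1(B_1)$ by extension plus CLMS; this is what your sketch is missing. Incidentally, the boundary issue you flag as the technical crux is not where the difficulty lies: because the maximal function defining $h^1(B_1)$ only uses scales $t<1-|x|$, any $W^{1,2}(\R^2)$ extension of $\eta$ and $u$ (after subtracting means) suffices --- no zero trace is needed, and the $C^{2,1}$ regularity of the metric plays no role in that step.
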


We point out that Theorem \ref{THMTL} applies to critical points of a large class of elliptic conformally invariant variational problems in dimension two (see \cite[Theorem I.2]{Riv1}). More precisely we have

\begin{corollary}\label{THMTL2} Let $\mathcal{N} \hookrightarrow \mathbb{R}^n$ be a closed $C^2$ Riemannian submanifold with a $C^{2,1}$ metric. Let $\omega$ be a $C^{1,1}$ $2$-form on $\mathcal{N}$ such that the Lipschitz norm of $d\omega$ is bounded on $\mathcal{N}$. Then every critical point in $W^{1,2}(B_1, \mathcal{N})$ of the Lagrangian
\begin{equation}
F(u)\,=\, \int_{B_1} \left[|\nabla u|^2 + \omega(u)(\partial_x u, \partial_y u)\right]dx \wedge dy
\end{equation}
satisfies equation \eqref{RiviereE} with
\begin{equation}
\Omega^i_j\,=\, [A^i(u)_{j,l}-A^j(u)_{i,l}]\nabla u^l + \frac{1}{4}[\lambda^i(u)_{j,l}-\lambda^j(u)_{i,l}]\nabla^{\perp} u^l\,,
\end{equation}
where $A$ and $\lambda$ are in $C^1(\mathcal{N}, M_n(\mathbb{R})\otimes\wedge^1\mathbb{R}^2))$ satisfying
$$ \sum_{j=1}^n A^j_{i,l} \nabla u^j = 0 \quad \text{and}\quad \lambda^i_{j,l} = d(\pi^{\ast}_{\mathcal{N}} \omega)(\varepsilon_i,\varepsilon_j,\varepsilon_l)$$
where $\pi_{\mathcal{N}}$ is the orthogonal projection onto $\mathcal{N}$ in a small tubular neighborhood of $\mathcal{N}$ and $\{\varepsilon_i\}_{i=1,...,n}$ is the canonical basis of $\mathbb{R}^n$. If additionally we have $E(u)\leq \varepsilon_0$ then
\begin{equation}
|\nabla u|^2 \in h^1(B_1) \quad\quad \text{and}\quad \quad \||\nabla u|^2 \|_{h^1(B_1)} \leq  C \int_{B_1}|\nabla u|^2\leq C\varepsilon_0\,.
\end{equation}
\end{corollary}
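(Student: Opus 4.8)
The plan is to obtain Corollary \ref{THMTL2} as an immediate consequence of Theorem \ref{THMTL}, once we record that critical points of $F$ fall under its hypotheses. First I would invoke Rivière's structure theorem \cite[Theorem I.2]{Riv1}: any $u\in W^{1,2}(B_1,\mathcal{N})$ which is critical for $F$ solves \eqref{RiviereE} with
$$\Omega^i_j = [A^i(u)_{j,l}-A^j(u)_{i,l}]\nabla u^l + \frac{1}{4}[\lambda^i(u)_{j,l}-\lambda^j(u)_{i,l}]\nabla^{\perp} u^l,$$
where $A$ (built from the second fundamental form of $\mathcal{N}$, so that $\sum_j A^j_{i,l}\nabla u^j=0$) and $\lambda$ (built from $d(\pi^{\ast}_{\mathcal{N}}\omega)$) are tensors of class $C^1$ on the compact manifold $\mathcal{N}$; this is precisely the point where the regularity hypotheses on $\mathcal{N}$ and on $\omega$ are used. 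In particular the bracket structure forces $\Omega^i_j=-\Omega^j_i$, and since $A(u),\lambda(u)\in L^\infty$ and $\nabla u\in L^2$ we have $\Omega\in L^2(B_1, so(n)\otimes\wedge^1\mathbb{R}^2)$, so that \eqref{RiviereE} is of the form treated in Theorem \ref{THMTL}.

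Next I would match this decomposition with the form required in Theorem \ref{THMTL} by setting
$$f^i_{jl}:=A^i(u)_{j,l}-A^j(u)_{i,l},\qquad g^i_{jl}:=\frac{1}{4}\big(\lambda^i(u)_{j,l}-\lambda^j(u)_{i,l}\big),$$
so that $\Omega^i_j=\sum_{l=1}^n f^i_{jl}\nabla u^l + g^i_{jl}\nabla^{\perp} u^l$ with the antisymmetries $f^i_{jl}=-f^j_{il}$ and $g^i_{jl}=-g^j_{il}$ built in. It then remains to verify the quantitative bound $\|f\|_{L^\infty(B_1)}+\|\nabla f\|_{L^2(B_1)}+\|g\|_{L^\infty(B_1)}+\|\nabla g\|_{L^2(B_1)}\le C$. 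Since $u$ takes values a.e. in the compact manifold $\mathcal{N}$ and $A,\lambda\in C^1(\mathcal{N})$, the compositions $A(u)$ and $\lambda(u)$ are bounded in $L^\infty(B_1)$ by $\|A\|_{C^0(\mathcal{N})}$ and $\|\lambda\|_{C^0(\mathcal{N})}$; moreover, by the chain rule for Sobolev functions $\nabla(A(u))=DA(u)\,\nabla u$ and $\nabla(\lambda(u))=D\lambda(u)\,\nabla u$ pointwise almost everywhere, and therefore
$$\|\nabla f\|_{L^2(B_1)}\le C\,\|DA\|_{C^0(\mathcal{N})}\,\|\nabla u\|_{L^2(B_1)}\le C\sqrt{\varepsilon_0},$$
and likewise $\|\nabla g\|_{L^2(B_1)}\le C\sqrt{\varepsilon_0}$. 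Hence all hypotheses of Theorem \ref{THMTL} are met with a constant depending only on $\mathcal{N}$ and $\omega$.

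Finally, since $E(u)\le\varepsilon_0$, Theorem \ref{THMTL} applies and yields directly $|\nabla u|^2\in h^1(B_1)$ together with $\||\nabla u|^2\|_{h^1(B_1)}\le C\int_{B_1}|\nabla u|^2\le C\varepsilon_0$, which is exactly the assertion of the corollary.

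I expect no substantial obstacle here: all the analysis is contained in Theorem \ref{THMTL}, and the corollary is essentially a translation. The only point that genuinely requires care is the regularity bookkeeping — verifying that the stated smoothness hypotheses ($\mathcal{N}$ a closed $C^2$ submanifold with $C^{2,1}$ metric and $\omega$ of class $C^{1,1}$ with $d\omega$ Lipschitz) are exactly what is needed for Rivière's tensors $A$ and $\lambda$ to be $C^1$ on $\mathcal{N}$, and hence for the uniform bounds on $f$ and $g$ above to hold. Once that dictionary between Rivière's normalization and the hypotheses of Theorem \ref{THMTL} is fixed, the proof reduces to a single appeal to that theorem.
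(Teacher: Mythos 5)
Your proposal is correct and follows exactly the route the paper intends: the paper gives no separate proof of this corollary, treating it as an immediate consequence of Rivi\`ere's structure theorem \cite[Theorem I.2]{Riv1} (which supplies the antisymmetric $\Omega$ with the stated $A$- and $\lambda$-terms) combined with Theorem \ref{THMTL}, after the same identification $f^i_{jl}=A^i(u)_{j,l}-A^j(u)_{i,l}$, $g^i_{jl}=\tfrac14[\lambda^i(u)_{j,l}-\lambda^j(u)_{i,l}]$ and the routine $L^\infty$/chain-rule bounds you carry out.
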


\begin{remark}
We remark that Corollary \ref{THMTL2} includes the case of the prescribed mean curvature equation in $\mathbb{R}^3$:
\begin{equation}
-\Delta u\,=\, -2 H(u) \partial_x u \wedge \partial_y u
\end{equation}
if $\|H\|_{W^{1,\infty}(B_1)} < \infty$.
\end{remark}

\begin{remark}
In \cite{Lin12}, using similar techniques, the second author proved an energy convexity along the harmonic map heat flow with small initial energy and fixed boundary data on $B_1$ . In particular this yields that such a harmonic map heat flow converges uniformly in time strongly in the $W^{1,2}$-topology, as time goes to infinity, to the unique limiting harmonic map.
\end{remark}
Our approach to the proof of Theorem \ref{THMTL} is based on Rivi\`ere's gauge decomposition. An immediate application of Theorem \ref{THMTL} is a new proof of Colding and Minicozzi's Theorem \ref{THM1} and Corollary \ref{COR1}.

The paper is organized as follows. In Section \ref{Sec1} we present some heuristic arguments and elaborate on the motivation of this paper. In Section \ref{RiviereGauge} we review the main tool of our proof, namely, Rivi\`ere's gauge decomposition technique. In Sections \ref{MatrixB} and \ref{MatrixU} we show improved estimates for the matrices $B$ and $P$ which are the two main ingredients of our proof. We finish the proof of our main theorem in Section \ref{LastSec}. In Appendix \ref{APPDEN} we include some results about local Hardy spaces.

\section{Heuristic arguments and motivation}\label{Sec1}
We present some heuristic arguments and sketch the basic idea of the proof of Theorem \ref{THM1} in this section. In order to prove the energy convexity result \eqref{Convexity1}, i.e.,
\begin{equation}
\frac{1}{2}\int_{B_1} |\nabla v-\nabla u|^2\,\leq\, \int_{B_1} |\nabla v|^2-\int_{B_1}|\nabla u|^2\,,
\end{equation}
it suffices to show
\begin{equation}\label{TEMP1}
\Psi\,\geq\, -\frac{1}{2}\int_{B_1} |\nabla v-\nabla u|^2\,,
\end{equation}
where (using that $u|_{\partial B_1}=v|_{\partial B_1}$ and the harmonic map equation \eqref{HM})
\begin{align}
 \Psi := &\int_{B_1} |\nabla v|^2-\int_{B_1}|\nabla u|^2-\int_{B_1} |\nabla v-\nabla u|^2 \notag\\
 = &\, 2\int_{B_1} \langle \nabla v - \nabla u ,\nabla u\rangle   \notag\\
 = &\,2\int_{B_1} \langle v - u ,  \,A(u)(\nabla u, \nabla u)\rangle\,.\label{Convexity2000}
\end{align}
Now we note that for any $p, q \in \mathcal{N}$, there exists a constant $C>0$, depending only on $\mathcal{N}$, such that $\left|(p-q)^{\perp}\right|\leq C |p-q|^2$, where the superscript $\perp$ denotes the normal component of a vector (see e.g. \cite[Lemma A.1]{CM2}). Therefore, using $A(u)(\nabla u, \nabla u) \perp T_u \mathcal{N}$ and the Cauchy-Schwarz inequality, \eqref{Convexity2000} yields
\begin{equation}
\Psi\geq  - C\int_{B_1} |(v - u)^\perp||\nabla u|^2 \geq   - C\int_{B_1} |v - u|^2|\nabla u|^2\,.
\end{equation}

Since $\varepsilon_0$ can always be chosen to be sufficiently small, we know that \eqref{Convexity1} will be achieved if we can show the following lemma.

\begin{lemma}\label{GradEst}
Let $u,v$ be as in Theorem \ref{THM1}. Then we have
\begin{equation}\label{Convexity4000}
\int_{B_1}|v - u|^2|\nabla u|^2\leq C\int_{B_1}|\nabla u|^2\int_{B_1}|\nabla v-\nabla u|^2 \leq C \varepsilon_0\int_{B_1}|\nabla v-\nabla u|^2\,.
\end{equation}
\end{lemma}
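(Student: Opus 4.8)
The plan is to deduce Lemma~\ref{GradEst} from the main result, Theorem~\ref{THMTL}, together with the duality between the local Hardy space $h^1(B_1)$ and $\mathrm{bmo}(B_1)$. First I would pass to the difference $w:=v-u$, which lies in $W^{1,2}_0(B_1,\mathbb{R}^n)$ since $u|_{\partial B_1}=v|_{\partial B_1}$, and for which $\int_{B_1}|\nabla w|^2=\int_{B_1}|\nabla v-\nabla u|^2$; thus \eqref{Convexity4000} is equivalent to $\int_{B_1}|w|^2|\nabla u|^2\le C\big(\int_{B_1}|\nabla u|^2\big)\big(\int_{B_1}|\nabla w|^2\big)$, the last bound $\le C\varepsilon_0\int_{B_1}|\nabla w|^2$ being immediate from $\int_{B_1}|\nabla u|^2=2E(u)\le 2\varepsilon_0$. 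Next, a weakly harmonic $u$ solves \eqref{RiviereE} with $\Omega^i_j=f^i_{jl}\nabla u^l$, $f^i_{jl}=A^i(u)_{j,l}-A^j(u)_{i,l}$ (so $g\equiv 0$ and $f^i_{jl}=-f^j_{il}$), and since the second fundamental form $A$ is bounded on the compact manifold $\mathcal{N}$ with $|\nabla(A(u))|\le C|\nabla u|$, one has $\|f\|_{L^\infty(B_1)}+\|\nabla f\|_{L^2(B_1)}\le C$; hence Theorem~\ref{THMTL} applies and yields $|\nabla u|^2\in h^1(B_1)$ with $\||\nabla u|^2\|_{h^1(B_1)}\le C\int_{B_1}|\nabla u|^2$.

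Since $h^1(B_1)$ is the predual of $\mathrm{bmo}(B_1)$ (Appendix~\ref{APPDEN}), we have $\int_{B_1}|w|^2|\nabla u|^2\le C\,\||\nabla u|^2\|_{h^1(B_1)}\,\big\||w|^2\big\|_{\mathrm{bmo}(B_1)}$, so everything reduces to the endpoint estimate $\big\||w|^2\big\|_{\mathrm{bmo}(B_1)}\le C\int_{B_1}|\nabla w|^2$ for $w\in W^{1,2}_0(B_1,\mathbb{R}^n)$, equivalently (summing over components) $\|\phi^2\|_{\mathrm{bmo}(B_1)}\le C\|\nabla\phi\|_{L^2(B_1)}^2$ for a scalar $\phi\in W^{1,2}_0(B_1)$. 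The $L^1$ part is trivial by Poincar\'e, $\|\phi^2\|_{L^1(B_1)}=\|\phi\|_{L^2(B_1)}^2\le C\|\nabla\phi\|_{L^2(B_1)}^2$. For the mean oscillation over a ball $B=B_r(x_0)\subset B_1$ I would subtract $(\phi_B)^2$ and use $\phi^2-(\phi_B)^2=(\phi-\phi_B)^2+2\phi_B(\phi-\phi_B)$, so that
\[
\frac1{|B|}\int_B\big|\phi^2-(\phi_B)^2\big|\ \le\ \frac1{|B|}\int_B(\phi-\phi_B)^2\ +\ 2|\phi_B|\,\frac1{|B|}\int_B|\phi-\phi_B|\,;
\]
the two–dimensional, scale invariant Poincar\'e inequality bounds the first term by $C\|\nabla\phi\|_{L^2(B)}^2$ and gives $\frac1{|B|}\int_B|\phi-\phi_B|\le C\|\nabla\phi\|_{L^2(B)}$.

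What remains — and this is the only genuinely delicate step — is to absorb the factor $|\phi_B|$, i.e.\ to show $|\phi_B|\,\|\nabla\phi\|_{L^2(B)}\le C\|\nabla\phi\|_{L^2(B_1)}^2$ uniformly in $B$. The naive bound $|\phi_B|\le C\|\nabla\phi\|_{L^2(B_1)}\sqrt{\log(1/r)}$, obtained from the Moser--Trudinger inequality and Jensen, is off by a logarithm, so one has to exploit that a large \emph{size} of $\phi$ near a point (as in the Moser--Trudinger extremal profile) does not create a correspondingly large \emph{oscillation} of $\phi^2$: concretely I would telescope over dyadic annuli, $\phi_B-\phi_{B_1}=\sum_k(\phi_{B_{2^{-k-1}}}-\phi_{B_{2^{-k}}})$, estimate each average again via Moser--Trudinger, and interpolate with H\"older's inequality at exponent $\sim\log(1/r)$; this is where the bulk of the technical work sits, and alternatively $\|\phi^2\|_{\mathrm{bmo}(B_1)}\le C\|\nabla\phi\|_{L^2(B_1)}^2$ may simply be quoted as a known endpoint property of the critical Sobolev space $W^{1,2}_0$ in dimension two. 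Once this is in hand, chaining the inequalities gives $\int_{B_1}|w|^2|\nabla u|^2\le C\big(\int_{B_1}|\nabla u|^2\big)\big(\int_{B_1}|\nabla v-\nabla u|^2\big)\le C\varepsilon_0\int_{B_1}|\nabla v-\nabla u|^2$, which is \eqref{Convexity4000}.
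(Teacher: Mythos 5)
Your first step is fine and matches the paper's intent: a weakly harmonic $u$ solves \eqref{RiviereE} with $\Omega^i_j=f^i_{jl}\nabla u^l$, $g\equiv 0$, and Theorem \ref{THMTL} gives $\||\nabla u|^2\|_{h^1(B_1)}\le C\int_{B_1}|\nabla u|^2$. The gap is in the second half: the endpoint inequality $\||w|^2\|_{\mathrm{bmo}(B_1)}\le C\|\nabla w\|_{L^2(B_1)}^2$ for $w\in W^{1,2}_0(B_1)$, on which your duality argument entirely hinges, is \emph{false}, and it cannot be ``quoted as a known endpoint property'' of $W^{1,2}_0$ in two dimensions. Counterexample: for small $r>0$ set $L=\log(1/r)$ and $\phi=\phi_1+\phi_2$, where $\phi_1=\min\{L^{-1/2}\log(1/|x|),\,L^{1/2}\}$ (so $\phi_1\equiv\sqrt{L}$ on $B_r$, $\|\nabla\phi_1\|_{L^2}^2=2\pi$) and $\phi_2$ equals $1$ on $B_{r/2}$, vanishes outside $B_r$ and is linear in $|x|$ in between ($\|\nabla\phi_2\|_{L^2}^2=3\pi$). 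Then $\phi\in W^{1,2}_0\cap C^0(B_1)$ with $\|\nabla\phi\|_{L^2}^2\le 10\pi$, but on $B_r(0)$ the function $\phi^2=(\sqrt{L}+\phi_2)^2$ equals $L+2\sqrt{L}+1$ on $B_{r/2}$ and is at most $L+\tfrac12\sqrt{L}+\tfrac1{16}$ on $\{7r/8\le|x|\le r\}$, two subsets of measure comparable to $|B_r|$; hence $\inf_{c}\,|B_r|^{-1}\int_{B_r}|\phi^2-c|\ \ge\ c_0\sqrt{L}\to\infty$ as $r\to 0$ while the Dirichlet energy stays bounded. (The extra information $\|v-u\|_{L^\infty}\le C(\mathcal{N})$ available in the lemma does not rescue the inequality either: shrink the height of $\phi_2$ and take $r$ smaller to defeat any fixed constant.) So the ``delicate step'' you flagged is not merely technical — no telescoping/Moser--Trudinger interpolation can prove a false statement — and the $h^1$--$\mathrm{bmo}$ route collapses there. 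A secondary point: the duality you cite is nowhere in Appendix \ref{APPDEN}, which contains only Wente's lemma and Theorem \ref{CDSTHM}.

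The lemma is nevertheless true because the paper never needs $|v-u|^2\in\mathrm{bmo}$. Its proof runs: Theorem \ref{THMTL} gives $|\nabla u|^2\in h^1(B_1)$; Theorem \ref{CDSTHM} then produces $\psi$ with $\Delta\psi=|\nabla u|^2$, $\psi|_{\partial B_1}=0$ and $\|\psi\|_{L^\infty}+\|\nabla\psi\|_{L^2}\le C\||\nabla u|^2\|_{h^1(B_1)}$; and Lemma \ref{GradEst1} (Colding--Minicozzi) converts this into \eqref{Convexity4000} by applying Stokes' theorem to $\operatorname{div}(|v-u|^2\nabla\psi)$ and to $\operatorname{div}(|v-u|^2\psi\nabla\psi)$ and absorbing, using $\Delta\psi\ge 0$ and the fact that the same factor $|v-u|^2$ appears on both sides. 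In duality language this exploits that the density $|\nabla u|^2$ is \emph{nonnegative}, so the relevant pairing is of $L\log L$ type against $\exp L$ (i.e.\ Moser--Trudinger for $|v-u|^2$), not the full signed $h^1$--$\mathrm{bmo}$ pairing; that, or the paper's potential-theoretic absorption argument, is the substitute your scheme would need.
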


In the following lemma we verify \eqref{Convexity4000} under some extra assumption.
\begin{lemma}\label{GradEst1}
Let $u,v$ be as in Theorem \ref{THM1} and suppose that we can find a solution $\psi \in W^{1,2}_0 \cap L^\infty(B_1)$ of
\begin{equation}\label{PSI1}
\left\{
   \begin{aligned}
     \Delta \psi\,& = \,|\nabla u|^2 && \text{in }\, B_1\,, \\
    \psi\,&=\, 0 && \text{on }\, \partial B_1\,,\\
   \end{aligned}
 \right.
\end{equation}
which satisfies
\begin{equation}\label{PSI2}
 \|\psi\|_{L^{\infty}(B_1)}+ \|\nabla \psi\|_{L^2(B_1)}\,\leq\, C\int_{B_1}|\nabla u|^2\leq C\varepsilon_0\,.
\end{equation}
Then Lemma \ref{GradEst} holds.
\end{lemma}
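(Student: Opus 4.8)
The plan is to deduce Lemma~\ref{GradEst} from the mere existence of $\psi$; in fact the argument below will not require any further use of the equation for $u$. Write $w:=v-u$. Since $u|_{\partial B_1}=v|_{\partial B_1}$ and both maps take values in the compact manifold $\mathcal{N}$, we have $w\in W^{1,2}_0(B_1,\mathbb{R}^n)\cap L^\infty(B_1,\mathbb{R}^n)$, and hence $|w|^2\in W^{1,2}_0(B_1)\cap L^\infty(B_1)$. I would introduce the two quantities
\[
X:=\int_{B_1}|w|^2|\nabla u|^2,\qquad Y:=\int_{B_1}|w|^2|\nabla\psi|^2,
\]
which are finite since $|w|\le\operatorname{diam}\mathcal{N}$ and $\nabla\psi\in L^2(B_1)$. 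As $\int_{B_1}|\nabla u|^2=2E(u)\le 2\varepsilon_0$, the estimate \eqref{Convexity4000} is equivalent to the bound $X\le C\big(\int_{B_1}|\nabla u|^2\big)\int_{B_1}|\nabla w|^2$.

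First I would bound $X$ by $Y$: testing the equation $\Delta\psi=|\nabla u|^2$ from \eqref{PSI1} against the admissible test function $|w|^2$ gives
\[
X=-\int_{B_1}\nabla(|w|^2)\cdot\nabla\psi=-2\sum_i\int_{B_1}w^i\,\nabla w^i\cdot\nabla\psi,
\]
whence, by the Cauchy--Schwarz inequality, $X\le 2\big(\int_{B_1}|\nabla w|^2\big)^{1/2}Y^{1/2}$.

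Next I would bound $Y$, using the smallness of $\|\psi\|_{L^\infty}$ in \eqref{PSI2}. Testing $\Delta\psi=|\nabla u|^2$ against $|w|^2\psi$ and expanding $\nabla(|w|^2\psi)=\psi\,\nabla(|w|^2)+|w|^2\nabla\psi$ yields
\[
Y=-\int_{B_1}\psi\,\nabla(|w|^2)\cdot\nabla\psi-\int_{B_1}\psi\,|w|^2|\nabla u|^2,
\]
so that, again by Cauchy--Schwarz and \eqref{PSI2},
\[
Y\le 2\|\psi\|_{L^\infty(B_1)}\Big(\int_{B_1}|\nabla w|^2\Big)^{1/2}Y^{1/2}+\|\psi\|_{L^\infty(B_1)}X ;
\]
a single application of Young's inequality absorbs the $Y^{1/2}$-term on the left and leaves $Y\le 4\|\psi\|_{L^\infty(B_1)}^2\int_{B_1}|\nabla w|^2+2\|\psi\|_{L^\infty(B_1)}X$.

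Finally I would close the loop: substituting this estimate into the inequality of the previous paragraph, using $\sqrt{a+b}\le\sqrt a+\sqrt b$ and Young's inequality once more, the term proportional to $X$ on the right-hand side is absorbed into the left-hand side, giving $X\le C\|\psi\|_{L^\infty(B_1)}\int_{B_1}|\nabla w|^2$, and hence, by \eqref{PSI2}, $\int_{B_1}|v-u|^2|\nabla u|^2\le C\int_{B_1}|\nabla u|^2\int_{B_1}|\nabla v-\nabla u|^2\le C\varepsilon_0\int_{B_1}|\nabla v-\nabla u|^2$, which is \eqref{Convexity4000}. The structural point — and what I expect to be the real content, since integrating $\int_{B_1}|w|^2\Delta\psi$ by parts twice merely reproduces trivial identities — is the self-referential estimate for the auxiliary quantity $Y$, which gets controlled in terms of itself and of $X$; the remaining care goes into justifying that $|w|^2$ and $|w|^2\psi$ are admissible $W^{1,2}_0\cap L^\infty$ test functions for $\Delta\psi=|\nabla u|^2\in L^1(B_1)$ (a standard density argument) and into keeping track of the absorption constants.
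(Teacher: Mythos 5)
Your argument is correct and is essentially the paper's own proof (taken from Colding--Minicozzi): the same two test functions $|v-u|^2$ and $|v-u|^2\psi$ for $\Delta\psi=|\nabla u|^2$, the same Cauchy--Schwarz bounds, and the same absorption using the smallness of $\|\psi\|_{L^\infty}$ from \eqref{PSI2}. The only difference is cosmetic: you close the loop with Young's inequality, whereas the paper re-inserts the first estimate \eqref{PSI3} and divides by $\left(\int_{B_1}|v-u|^2|\nabla\psi|^2\right)^{1/2}$.
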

\begin{proof}
The following proof is taken from \cite{CM1}. Substituting \eqref{PSI1} into the left-hand side of \eqref{Convexity4000} yields (using also that $v = u$ on $\partial B_1$)
\begin{align}\label{PSI3}
\int_{B_1}|v - u|^2&|\nabla u|^2= \int_{B_1}
|v - u|^2\Delta\psi\leq\int_{B_1} |\nabla |v - u|^2||\nabla
\psi| \notag\\
\leq\,&2\left(\int_{B_1}|\nabla v-\nabla u|^2\right)^{1/2}\left(\int_{B_1}\,|v - u|^2|\nabla
\psi|^2\right)^{1/2},
\end{align}
where we have applied Stokes' theorem to $\text{div}(|v-u|^2 \nabla\psi)$ and
used the Cauchy-Schwarz inequality. Now applying Stokes' theorem to $\text{div}(|v-u|^2\psi \nabla\psi)$ and using that $\Delta\psi\,\geq\,0$ and \eqref{PSI3}, we have
\begin{align}\label{PSI4}
\int_{B_1}|v-&u|^2|\nabla \psi|^2 \leq\,\int_{B_1}
|\psi|(|v-u|^2\Delta\psi+|\nabla |v-u|^2||\nabla
\psi|)\notag\\
\leq &\,4\|\psi\|_{L^{\infty}}\left(\int_{B_1}|\nabla v-\nabla u|^2\right)^{1/2}\left(\int_{B_1}|v-u|^2|\nabla \psi|^2\right)^{1/2},
\end{align}
and therefore
\begin{equation}\label{PSI5}
\left(\int_{B_1} |v-u|^2|\nabla \psi|^2\right)^{1/2}\leq
4\|\psi\|_{L^{\infty}}\left(\int_{B_1}|\nabla v-\nabla u|^2\right)^{1/2}.
\end{equation}
Finally, substituting \eqref{PSI5} back into \eqref{PSI3} and combining with \eqref{PSI2} (and choosing $\varepsilon_0$ sufficiently small of course) yields
\begin{equation}
\int_{B_1}|v - u|^2|\nabla u|^2\leq C \|\psi\|_{L^{\infty}}\int_{B_1}|\nabla v-\nabla u|^2\leq C\int_{B_1}|\nabla u|^2\int_{B_1}|\nabla v-\nabla u|^2\,,
\end{equation}
which is just \eqref{Convexity4000}.
\end{proof}

Therefore, everything boils down to validating the assumptions in Lemma \ref{GradEst1}, i.e., the existence of a function $\psi$ satisfying \eqref{PSI1} and \eqref{PSI2}.
In the light of the regularity Theorem \ref{CDSTHM} it will be sufficient to show that $|\nabla u|^2$ is in the local Hardy space $h^1(B_1)$ (with estimates) and this is exactly the claim of our main theorem \ref{THMTL}.

\section{Rivi\`ere's gauge decomposition}\label{RiviereGauge}
Following the strategy of Uhlenbeck in \cite{Uh}, Rivi\`ere \cite{Riv1} used the algebraic feature of $\Omega$, namely $\Omega$ being antisymmetric, to construct $\xi\in W^{1,2}_{0}(B_1, so(n))$ and a gauge transformation matrix $P\in W^{1,2}\cap L^\infty(B_1, SO(n))$ (which pointwise almost everywhere is an orthogonal matrix in $\mathbb{R}^{n\times n}$) satisfying some good properties.
\begin{theorem}\label{PCloseTOID}(\cite[Lemma A.3]{Riv1}) There exist $\varepsilon>0$ and $C>0$ such that for every $\Omega$ in $L^2(B_1, so(n)\otimes\wedge^1\mathbb{R}^2)$ satisfying
$$
\int_{B_1}|\Omega|^2\,\leq\, \varepsilon\,,
$$
there exist $\xi\in W_0^{1,2}(B_1, so(n))$ and $P\in W^{1,2}(B_1, SO(n))$ such that
\begin{equation}\label{P-1}
\nabla^{\perp}\xi \,=\, P^{T}\nabla P + P^T\Omega P \text{ in} \,\,B_1 \quad\text{with}\quad \xi \,=\,0\text{ on} \,\,\partial B_1,
\end{equation}
and
\begin{equation}\label{P-2}
\|\nabla \xi\|_{L^2(B_1)}+ \|\nabla P\|_{L^2(B_1)} \,\leq\, C\|\Omega\|_{L^2(B_1)}\,.
\end{equation}
Here the superscript $T$ denotes the transpose of a matrix.
\end{theorem}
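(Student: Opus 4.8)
The plan is to produce $P$ as a minimizer of a Coulomb-type energy and then to read off $\xi$ from the corresponding Euler--Lagrange equation via the Poincar\'e lemma; this is the two-dimensional analogue, with $\Omega$ prescribed, of the gauge-fixing of Uhlenbeck referred to in the text. Since $P(x)$ is orthogonal, $|P^TM|=|M|$ pointwise for any matrix-field $M$, so minimizing $\int_{B_1}|P^T\nabla P+P^T\Omega P|^2$ over $P\in W^{1,2}(B_1,SO(n))$ is equivalent to minimizing the cleaner functional $F(P)=\int_{B_1}|\nabla P+\Omega P|^2\,dx$. First I would establish the existence of a minimizer by the direct method. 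The target constraint is what makes this work: every competitor has $|P|=\sqrt n$ a.e., hence $|\nabla P+\Omega P|\le|\nabla P|+\sqrt n\,|\Omega|$ shows a minimizing sequence $P_k$ is bounded in $W^{1,2}\cap L^\infty$; passing to a subsequence, $P_k\rightharpoonup P$ in $W^{1,2}$ and $P_k\to P$ a.e., so $P(x)\in SO(n)$ a.e. The functional is not convex in $(P,\nabla P)$, but $\nabla P_k+\Omega P_k$ is bounded in $L^2$ and, writing $\Omega P_k=\Omega P+\Omega(P_k-P)$, converges to $\nabla P+\Omega P$ distributionally (the last term $\to 0$ in $L^1$ by strong $L^2$-convergence of $P_k$), hence weakly in $L^2$; weak lower semicontinuity of the $L^2$-norm then gives $F(P)\le\liminf F(P_k)$.

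Next I would compute the Euler--Lagrange equation using the variations $P_t:=Pe^{t\omega}$ with $\omega\in C^\infty(\overline{B_1},so(n))$, which remain in $W^{1,2}(B_1,SO(n))$. With $M:=\nabla P+\Omega P$ one finds $\frac{d}{dt}\big|_{t=0}(\nabla P_t+\Omega P_t)=M\omega+P\nabla\omega$, so
\[
0=\tfrac12\,\tfrac{d}{dt}\Big|_{t=0}F(P_t)=\int_{B_1}\langle M,M\omega\rangle+\int_{B_1}\langle M,P\nabla\omega\rangle .
\]
The algebraic heart of the argument is that the first integrand vanishes pointwise: each $M_i^TM_i$ is symmetric and $\omega$ is antisymmetric, so $\operatorname{tr}(M_i^TM_i\omega)=0$. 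Setting $A:=P^T\nabla P+P^T\Omega P=P^TM$ --- which is $so(n)$-valued, since both $P^T\nabla P$ and $P^T\Omega P$ are antisymmetric --- the identity reduces to $\int_{B_1}\langle A,\nabla\omega\rangle=0$ for every $\omega\in C^\infty(\overline{B_1},so(n))$. Testing first against $\omega\in C_c^\infty$ gives $\operatorname{div}A=0$ in $B_1$, and the surviving boundary term then forces $A\cdot\nu=0$ on $\partial B_1$ in the weak sense.

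The rest is routine. Since $B_1$ is simply connected and $\operatorname{div}A=0$, the Poincar\'e lemma produces $\xi\in W^{1,2}(B_1,so(n))$ with $\nabla^\perp\xi=A$, which is \eqref{P-1}; the condition $A\cdot\nu=0$ on $\partial B_1$ says that the tangential derivative of $\xi$ along $\partial B_1$ vanishes, so $\xi$ is constant there and we subtract that constant to arrange $\xi\in W_0^{1,2}(B_1,so(n))$. For \eqref{P-2}, testing minimality against $P\equiv\mathrm{Id}$ gives $\|A\|_{L^2}^2=F(P)\le F(\mathrm{Id})=\|\Omega\|_{L^2}^2$; since $|\nabla\xi|=|A|$ pointwise and $|\nabla P|=|P^T\nabla P|=|A-P^T\Omega P|\le|A|+|\Omega|$, we obtain $\|\nabla\xi\|_{L^2}+\|\nabla P\|_{L^2}\le 3\|\Omega\|_{L^2}$. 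I expect the main obstacle to be the existence/lower-semicontinuity step: because $F$ is genuinely nonconvex, one must exploit the compactness of $SO(n)$ both to extract a limit lying in the constraint set and to identify the weak $L^2$-limit of $\nabla P_k+\Omega P_k$; a secondary delicate point is the interpretation of the natural boundary condition $A\cdot\nu=0$ and the deduction that $\xi$ has constant (hence, after normalization, zero) trace. Finally, the smallness $\int_{B_1}|\Omega|^2\le\varepsilon$ is not actually needed in this minimization argument; it becomes essential only if one instead proves the theorem by Uhlenbeck's continuity method, where it prevents the a priori estimate from degenerating in the limit.
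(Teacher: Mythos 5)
Your argument is correct, but it is not the proof behind the paper's citation: Theorem \ref{PCloseTOID} is quoted from \cite[Lemma A.3]{Riv1}, where $P$ and $\xi$ are produced by Uhlenbeck's continuity-method argument \cite{Uh} (one shows that the set of small-norm $\Omega$ admitting a gauge with the bound \eqref{P-2} is nonempty, open and closed, and the smallness $\int_{B_1}|\Omega|^2\le\varepsilon$ is what allows the a priori estimates, at the level of $W^{2,2}$-bounds on $P$, to be closed). What you do instead is exactly the alternative construction the paper itself records in Remark \ref{PMATRIXX}, namely minimizing $E(R)=\int_{B_1}|R^T\nabla R+R^T\Omega R|^2$ over $R\in W^{1,2}(B_1,SO(n))$, as in \cite{Cho} and \cite{Sc}. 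The comparison is genuinely interesting: your route is more elementary (direct method using the compactness of $SO(n)$, the pointwise identity $\operatorname{tr}(S\omega)=0$ for $S$ symmetric and $\omega$ antisymmetric to kill the zeroth-order term in the first variation, then the Poincar\'e lemma), it gives \eqref{P-1}--\eqref{P-2} with an explicit constant, and, as you correctly observe, it needs no smallness of $\Omega$ at all --- the hypothesis $\int_{B_1}|\Omega|^2\le\varepsilon$ only becomes essential in the subsequent step, Theorem \ref{PCloseTOID299}, and in the continuity-method proof. The Uhlenbeck--Rivi\`ere approach, on the other hand, yields additional structural and regularity information on the gauge that the bare minimization does not provide, which is why it is the standard reference.

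Two points you pass over quickly do deserve a line each, though neither is a gap. First, your Euler--Lagrange identity $\int_{B_1}\langle A,\nabla\omega\rangle=0$ holds only for $so(n)$-valued test maps $\omega$; to conclude $\operatorname{div}A^i_j=0$ for each fixed pair $(i,j)$ you must use that $A=P^T\nabla P+P^T\Omega P$ is itself antisymmetric, testing with $\omega=\phi\,(e_i\otimes e_j-e_j\otimes e_i)$ for scalar $\phi$. Second, since $A$ is merely $L^2$, the natural boundary condition ``$A\cdot\nu=0$'' should be read through the weak formulation $\int_{B_1}\langle A,\nabla\phi\rangle=0$ for all $\phi\in C^\infty(\overline{B_1})$, which says precisely that the extension of $A$ by zero is divergence-free on all of $\mathbb{R}^2$; applying the Poincar\'e lemma to this extension gives $\xi$ constant on the connected set $\mathbb{R}^2\setminus B_1$, and subtracting that (antisymmetric) constant yields $\xi\in W^{1,2}_0(B_1,so(n))$, which is the cleanest way to justify your trace argument.
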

\begin{remark} Multiplying both sides of equation \eqref{P-1} by $P$ from the left gives that (with indices and $1\leq m, z\leq n$)
\begin{equation} \label{P-4}
\nabla P^i_j\,=\,P^i_m\nabla^{\perp} \xi^m_j - \Omega^i_z \,P^z_j,\quad  1\leq i,j \leq n\,.
\end{equation}
\end{remark}
\begin{remark}\label{PMATRIXX}
Besides Uhlenbeck's method there is another way to construct the gauge tranformation matrix $P$, namely one can minimize the energy functional
\begin{equation}
E(R) \,=\, \int_{B_1} \left|R^T\nabla R + R^T\Omega R\right|^2
\end{equation}
among all $R\in W^{1,2}(B_1,SO(n))$, see e.g. \cite{Cho} and \cite{Sc}.
\end{remark}

Another key result from Rivi\`ere's work is the following theorem.
\begin{theorem}(\cite[Theorem I.4]{Riv1})\label{PCloseTOID299} There exist $\varepsilon>0$ and $C>0$ such that for every $\Omega$ in $L^2(B_1, so(n)\otimes\wedge^1\mathbb{R}^2)$ satisfying
$$
\int_{B_1}|\Omega|^2\,\leq\, \varepsilon\,,
$$
there exist $\widehat{A} \in W^{1,2}\,\cap\, C^{0}(B_1, Gl_n(\mathbb{R}))$, $A = (\widehat{A}+ Id)\,P^{T} \in L^{\infty}\,\cap\, W^{1,2}(B_1, Gl_n(\mathbb{R}))$ and $B\in W_0^{1,2}(B_1, M_n(\mathbb{R}))$ such that
\begin{equation}\label{A-B-1}
\nabla A - A \Omega\,=\, \nabla^{\perp} B
\end{equation}
and
\begin{equation}\label{A-B-2}
\|\widehat{A}\|_{W^{1,2}(B_1)} + \|\widehat{A}\|_{L^\infty(B_1)} + \|B\|_{W^{1,2}(B_1)}\,\leq\, C \|\Omega\|_{L^2(B_1)}\,.
\end{equation}
\end{theorem}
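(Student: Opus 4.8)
The plan is to construct $A$ and $B$ as a perturbation of the Coulomb gauge $(P,\xi)$ furnished by Theorem \ref{PCloseTOID}. Set $A = (\widehat A + Id)P^{T}$, so that the unknowns are the matrix $\widehat A = AP - Id$, which must be shown to be small, and $B \in W^{1,2}_{0}$. First I would record the algebraic identity underlying everything: multiplying \eqref{P-1} on the right by $P^{T}$ and using $\nabla(P^{T})P = -P^{T}\nabla P$ (from $P^{T}P = Id$) gives $P^{T}\Omega = \nabla^{\perp}\xi\,P^{T} + \nabla(P^{T})$. Substituting this together with $\nabla A = \nabla\widehat A\,P^{T} + (\widehat A + Id)\nabla(P^{T})$ into $\nabla A - A\Omega$, the two terms containing $\nabla(P^{T})$ cancel and one is left with
\[
\nabla A - A\Omega \;=\; \bigl(\nabla\widehat A - (\widehat A+Id)\nabla^{\perp}\xi\bigr)P^{T}.
\]
Hence the conservation law \eqref{A-B-1} is equivalent to the first order system (multiply on the right by $P$)
\[
\nabla\widehat A \;=\; (\widehat A + Id)\,\nabla^{\perp}\xi \;+\; (\nabla^{\perp}B)\,P \qquad\text{in }B_{1}.
\]

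Next I would turn this system into a decoupled pair of second order problems by a Hodge/div--curl argument. Taking the divergence annihilates $\nabla\!\cdot\!\nabla^{\perp}\xi$ and $\nabla\!\cdot\!\nabla^{\perp}B$ and leaves the two Jacobian-type terms
\[
\Delta\widehat A \;=\; \nabla\widehat A\cdot\nabla^{\perp}\xi \;+\; \nabla^{\perp}B\cdot\nabla P \qquad\text{in }B_{1},
\]
which I would solve with the Neumann datum read off from the first order system, $\partial_{\nu}\widehat A = \nu\cdot\bigl((\widehat A+Id)\nabla^{\perp}\xi + (\nabla^{\perp}B)P\bigr)$ on $\partial B_{1}$, and the normalisation $\int_{B_{1}}\widehat A = 0$. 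For $B$ I would instead take the curl, i.e.\ apply $\nabla^{\perp}\!\cdot$ to $\nabla^{\perp}B = \bigl(\nabla\widehat A - (\widehat A+Id)\nabla^{\perp}\xi\bigr)P^{T}$, obtaining an equation $\Delta B = (\text{bilinear terms})$ to be solved with $B = 0$ on $\partial B_{1}$. Here one must invoke \eqref{P-4} (so that $\nabla P = P\nabla^{\perp}\xi - \Omega P$ and the apparent $\Delta\xi$, $\Delta P$ can be re-expressed), the antisymmetry of $\Omega$ and $\xi$, and the orthogonality of $P$, in order to rewrite every term of the right-hand side in the div--curl form $\nabla f\cdot\nabla^{\perp}g$, possibly flanked by the bounded matrices $P$, $P^{T}$; by the Wente/Coifman--Lions--Meyer--Semmes estimate (cf.\ Lemma \ref{Wen}) the right-hand side then lies in $\mathcal H^{1}$, so the Dirichlet problem has a solution $B \in W^{1,2}_{0}\cap C^{0}$.

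With these two problems in hand I would run a Banach fixed-point argument: the map sends $(\widehat A, B)$ in a small ball of $\bigl(W^{1,2}\cap C^{0}\bigr)\times\bigl(W^{1,2}_{0}\cap C^{0}\bigr)$ to the solution $(\widehat A', B')$ of the Neumann and Dirichlet problems above with the old iterate frozen in the right-hand sides and in the matrix coefficients. Applying the Wente estimate to each Jacobian term and standard elliptic estimates to the boundary data, one gets
\[
\|\widehat A'\|_{W^{1,2}} + \|\widehat A'\|_{L^{\infty}} + \|B'\|_{W^{1,2}} + \|B'\|_{L^{\infty}} \;\le\; C\bigl(\|\nabla\xi\|_{L^{2}} + \|\nabla P\|_{L^{2}}\bigr)\bigl(1 + \|\widehat A\|_{W^{1,2}} + \|\widehat A\|_{L^{\infty}} + \|B\|_{W^{1,2}} + \|B\|_{L^{\infty}}\bigr),
\]
and a corresponding Lipschitz bound for differences of two iterates. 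Since $\|\nabla\xi\|_{L^{2}} + \|\nabla P\|_{L^{2}} \le C\|\Omega\|_{L^{2}} \le C\sqrt{\varepsilon}$ by \eqref{P-2}, for $\varepsilon$ small this is a contraction of a small ball into itself, and its fixed point satisfies $\|\widehat A\|_{W^{1,2}} + \|\widehat A\|_{L^{\infty}} + \|B\|_{W^{1,2}} \le C\|\Omega\|_{L^{2}}$, which is \eqref{A-B-2}. In particular $\widehat A \in W^{1,2}\cap C^{0}$ and, shrinking $\varepsilon$ once more so that $\|\widehat A\|_{L^{\infty}} < 1$, the matrix $A = (\widehat A + Id)P^{T}$ is pointwise invertible and lies in $L^{\infty}\cap W^{1,2}(B_{1}, Gl_{n}(\R))$. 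Finally, the fixed-point relations say exactly that $\nabla\widehat A - (\widehat A+Id)\nabla^{\perp}\xi - (\nabla^{\perp}B)P$ is closed, co-closed, and has vanishing normal trace on $\partial B_{1}$, hence vanishes identically; unwinding the algebra of the first paragraph, this is precisely \eqref{A-B-1}.

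The main obstacle is the compensation step inside the $B$-equation (and, implicitly, in closing the Neumann estimate for $\widehat A$): a priori, differentiating the matrix products in $\bigl(\nabla\widehat A - (\widehat A+Id)\nabla^{\perp}\xi\bigr)P^{T}$ produces symmetric contractions such as $\nabla\widehat A\cdot\nabla\xi$ and $\nabla\xi\cdot\nabla P$, along with second-derivative terms $\Delta\xi$, $\Delta P$ and even $\nabla\!\cdot\!\Omega$, none of which is better than $L^{1}$ in isolation. The point of the argument is that, after systematically substituting the Coulomb relations \eqref{P-1}/\eqref{P-4} and using the antisymmetry of $\xi$ and $\Omega$, all of these recombine into genuine Jacobian determinants (flanked by bounded orthogonal matrices), so that the ``integration by compensation'' phenomenon applies and Wente's lemma yields the decisive gain: $L^{\infty}$ and $W^{1,2}$ control with a constant proportional to $\|\nabla\xi\|_{L^{2}} + \|\nabla P\|_{L^{2}}$, which is what makes the iteration converge and simultaneously gives the continuity of $\widehat A$ and the smallness bound \eqref{A-B-2}.
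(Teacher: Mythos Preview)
The paper does not prove this theorem at all; it is quoted verbatim from Rivi\`ere \cite[Theorem I.4]{Riv1} and used as a black box. Your sketch is, in outline, exactly Rivi\`ere's original argument: pass to the Coulomb gauge $(P,\xi)$, rewrite \eqref{A-B-1} as the first–order system $\nabla\widehat A=(\widehat A+Id)\nabla^\perp\xi+(\nabla^\perp B)P$, derive second–order problems for $\widehat A$ (Neumann) and $B$ (Dirichlet), and close by a contraction using Wente's lemma together with \eqref{P-2}. Your algebra in the first paragraph and the equation $\Delta\widehat A=\nabla\widehat A\cdot\nabla^\perp\xi+\nabla^\perp B\cdot\nabla P$ are correct; note also that since $\xi=0$ and $B=0$ on $\partial B_1$ one has $\nabla^\perp\xi\cdot\nu=\partial_\tau\xi=0$ and $\nabla^\perp B\cdot\nu=0$, so the Neumann datum is simply $\partial_\nu\widehat A=0$.

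Where you go slightly astray is the $B$–equation. Applying $\nabla^\perp\!\cdot$ to $\nabla^\perp B=\bigl[\nabla\widehat A-(\widehat A+Id)\nabla^\perp\xi\bigr]P^T$ gives
\[
\Delta B \;=\; \nabla\widehat A\cdot\nabla^\perp P^{T}\;-\;\nabla^\perp\!\cdot\bigl[(\widehat A+Id)(\nabla^\perp\xi)P^{T}\bigr],
\]
and the elementary identity $\nabla^\perp\!\cdot\bigl(M(\nabla^\perp\xi)N\bigr)=\operatorname{div}\bigl(M(\nabla\xi)N\bigr)$ (just expand both sides) turns the second term into the divergence of an $L^2$ vector field with norm $\lesssim (1+\|\widehat A\|_{L^\infty})\|\nabla\xi\|_{L^2}$. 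There is no need to ``re-express $\Delta\xi$, $\Delta P$, $\nabla\!\cdot\Omega$'' or to invoke \eqref{P-4} and the antisymmetry of $\Omega$; those terms never have to be isolated. Since the theorem only asserts $B\in W^{1,2}_0$ (no $L^\infty$ control on $B$ is claimed or needed), the divergence term is handled by ordinary $L^2$ elliptic theory, and Wente's lemma is required only for the Jacobian $\nabla\widehat A\cdot\nabla^\perp P^T$ and, crucially, for the $\widehat A$–equation where the $L^\infty$/$C^0$ bound is essential. With this simplification your contraction estimate and the rest of the argument go through as written.
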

Combining \eqref{A-B-1} with \eqref{CIPDE1} gives the conservation law
 \begin{equation}\label{AandB2}
    \text{div }(A\nabla u+B\nabla^{\perp}u)=0.
   \end{equation}
In particular, \eqref{AandB2} yields
\begin{equation}\label{Regularity22}
  \left\{
   \begin{aligned}
    \text{div }( A\nabla u )  &= -\nabla B \cdot\nabla^\perp u \\
    \text{curl }( A\nabla u ) &= \nabla^\perp A \cdot\nabla u\,.
   \end{aligned}
 \right.
   \end{equation}
Now using the Hodge decomposition (see e.g. \cite[Corollary 10.5.1]{IM}) we get the existence of $D \in W_0^{1,2}(B_1, \mathbb{R}^n)$ and $E \in W^{1,2}(B_1, \mathbb{R}^n)$ such that
\begin{equation}\label{DandE1}
A\nabla u = \nabla D + \nabla^{\perp}E
\end{equation}
with
\begin{align*}
||\nabla D||_{L^2(B_1)}+||\nabla E||_{L^2(B_1)} \le C ||\nabla u||_{L^2(B_1)}.
\end{align*}
By \eqref{Regularity22} we have
\begin{equation}\label{DandE2}
  \left\{
   \begin{aligned}
    \Delta D  &= -\nabla B \cdot\nabla^\perp u \\
    \Delta E  &= \nabla^\perp A \cdot\nabla u\,.
   \end{aligned}
 \right.
   \end{equation}
Then by the results of \cite{CLMS} and via an extension argument, Rivi\`ere obtained that
\begin{equation}\label{DandE4}
D, E \in W^{2,1}_{loc}(B_1)\quad\text{and therefore}\quad u \in W^{2,1}_{loc}(B_1) \hookrightarrow C^0(B_1).
\end{equation}
Combining the fact that $D = 0$ on $\partial B_1$ with Wente's lemma \ref{Wen} and Remark \ref{Wen2} moreover yields
\begin{equation}\label{EstimateofD}
\|D\|_{L^\infty(B_1)}  + \|\nabla D\|_{L^{2,1}(B_1)}  \leq C\int_{B_1}|\nabla u|^2\leq C\varepsilon_0,
\end{equation}
where the Lorentz space $L^{2,1}(B_1)$ is defined as the set of all measurable functions such that the norm
\[
||u||_{L^{2,1}(B_1)}:=\int_0^\infty |\{x\in B_1:\,\ |u(x)|>t\}|^{1/2}  dt
\]
is finite.

\section{Hidden Jacobian structures}\label{4}
Our main observation in this section are two hidden Jacobian structures for $\Delta B$ and $\Delta P$ in the case that $\Omega$ is as in Theorem \ref{THMTL}.

\subsection{Improved global estimate on the matrix $B$}\label{MatrixB}

We first show that $B$ is close to the zero matrix if $E(u)\leq \varepsilon_0$ is sufficiently small. Note that in the case of harmonic maps into a round sphere, a straightforward calculation shows that
\[
 \nabla^\perp B^{i}_j = u^i \nabla u^j -u^j \nabla u^i
\]
and therefore
\[
 \Delta B^i_j =\nabla^\perp u^i \nabla u^j- \nabla^\perp u^j \nabla u^i.
\]
Combining this with the fact that $B=0$ on $\partial B_1$ and Wente's lemma \ref{Wen} yields
\[
 ||B||_{L^\infty(B_1)} \le C \int_{B_1}|\nabla u|^2 \le C \varepsilon_0.
\]
In the following we show that a similar result remains true for the class of systems satisfying the assumptions of Theorem \ref{THMTL}.

\begin{proposition}\label{XI}
Under the assumptions of Theorem \ref{THMTL} we have
\begin{equation}\label{EstimateofB}
\|B\|_{L^\infty(B_1)}\,\leq\, C\|\nabla u\|_{L^2(B_1)}\,\leq\,C \sqrt{\varepsilon_0}\,.\end{equation}
\end{proposition}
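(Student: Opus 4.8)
The plan is to extract a Jacobian (Wente) structure for $\Delta B$ from the gauge equation \eqref{A-B-1} and the special form of $\Omega$, then invoke Wente's lemma together with the boundary condition $B = 0$ on $\partial B_1$ to conclude an $L^\infty$ bound. First I would take the curl of \eqref{A-B-1}: since $\nabla A - A\Omega = \nabla^\perp B$, applying $\mathrm{curl}$ kills $\nabla A$ on the left (as $\mathrm{curl}\,\nabla = 0$) and turns $\mathrm{curl}\,\nabla^\perp B$ into $\Delta B$, giving
\begin{equation}
\Delta B = -\,\mathrm{curl}(A\,\Omega) = \nabla^\perp A \cdot \Omega - A\,\mathrm{curl}\,\Omega\,.
\end{equation}
Now I would substitute the hypothesis $\Omega^i_j = f^i_{jl}\nabla u^l + g^i_{jl}\nabla^\perp u^l$. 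The term $A\,\mathrm{curl}\,\Omega$ will contain the pieces $A\,(\nabla^\perp f^i_{jl}\cdot\nabla u^l)$, $A\,(\nabla g^i_{jl}\cdot\nabla u^l)$ and $A\,(f^i_{jl}\,\mathrm{curl}\,\nabla u^l) = 0$, $A\,(g^i_{jl}\,\Delta u^l)$; here $\Delta u^l = -(\Omega\cdot\nabla u)^l$ is again quadratic in $\nabla u$. The point is that every resulting term is, schematically, a coefficient in $L^\infty\cap W^{1,2}$ times a Jacobian-type quantity $\nabla a\cdot\nabla^\perp b$ (or a product that can be re-expressed via the equation as such), so the right-hand side of the equation for $\Delta B$ lies in $\mathcal{H}^1 + $ (something controllable). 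The genuinely antisymmetric structure of $f$ and $g$ (the conditions $f^i_{jl} = -f^j_{il}$, $g^i_{jl} = -g^j_{il}$) is what makes $A = (\widehat A + \mathrm{Id})P^T$ interact correctly so that no bad, purely-$L^1$ term survives — this is the analogue of the round-sphere computation sketched just before the proposition, where $\nabla^\perp B^i_j = u^i\nabla u^j - u^j\nabla u^i$ produces $\Delta B^i_j = \nabla^\perp u^i\cdot\nabla u^j - \nabla^\perp u^j\cdot\nabla u^i$.

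Next I would organize the right-hand side of the $\Delta B$-equation into a sum $\sum_k \nabla a_k \cdot \nabla^\perp b_k$ where each $a_k, b_k$ lies in $W^{1,2}(B_1)$ with $\|\nabla a_k\|_{L^2}\|\nabla b_k\|_{L^2} \le C\|\nabla u\|_{L^2}^2$ — using $\|\widehat A\|_{W^{1,2}} + \|\widehat A\|_{L^\infty} + \|\nabla P\|_{L^2} + \|\nabla f\|_{L^2} + \|\nabla g\|_{L^2}\le C$ and $\|f\|_\infty + \|g\|_\infty \le C$, and replacing $\Delta u$ factors by $\Omega\cdot\nabla u$ and hence by $f\nabla u + g\nabla^\perp u$ contracted with $\nabla u$. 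Terms where a bounded (but not $W^{1,2}$) coefficient multiplies a Jacobian are handled by writing the coefficient as one of the factors only if it is $W^{1,2}$; otherwise I would isolate the Jacobian $\nabla u^l\cdot\nabla^\perp u^m$-type core, solve the corresponding Wente problem, and carry the $L^\infty$ coefficient outside. Then Wente's lemma \ref{Wen} (with the zero boundary data for $B$, cf. Remark \ref{Wen2}) yields for each piece an $L^\infty$ estimate of the form $\|B_k\|_{L^\infty(B_1)} \le C\|\nabla a_k\|_{L^2}\|\nabla b_k\|_{L^2} \le C\|\nabla u\|_{L^2}^2 \le C\varepsilon_0$. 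Summing, $\|B\|_{L^\infty(B_1)} \le C\|\nabla u\|_{L^2(B_1)}^2 \le C\varepsilon_0$, which is even stronger than the claimed $C\|\nabla u\|_{L^2(B_1)} \le C\sqrt{\varepsilon_0}$; the weaker form in \eqref{EstimateofB} presumably reflects a term (e.g. one genuinely only linear in $\nabla u$ after one $\Delta u$-substitution and one derivative falling on $f$ or $g$) that only gives $C\|\nabla u\|_{L^2}$.

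The main obstacle I anticipate is bookkeeping the terms in $\mathrm{curl}(A\Omega)$ so that each one truly has a Wente (div-curl / Jacobian) structure rather than merely lying in $L^1$: a priori $A\Omega \in L^2$ only, $\mathrm{curl}\,\Omega$ involves $\Delta u$ and $\nabla f, \nabla g$ which are only $L^2$, so the naive bound on $\Delta B$ is just $L^1$, which is not enough to bound $B$ in $L^\infty$ (the embedding $W^{2,1}\hookrightarrow C^0$ in 2D needs precisely the $\mathcal H^1$ / Wente improvement). The resolution is that the antisymmetry of $f,g$ forces cancellations — concretely, one should regroup $\nabla^\perp A\cdot\Omega - A\,\mathrm{curl}\,\Omega$ using $\nabla A = A\Omega + \nabla^\perp B$ from \eqref{A-B-1} to trade $\nabla A$ for lower-order data, producing terms like $\nabla^\perp B \cdot \Omega$ (a product of two $W^{1,2}$-gradients, hence Wente) and $A\Omega\cdot\Omega$-type terms that must be shown to be Jacobians via the antisymmetry $\Omega^i_j = -\Omega^j_i$. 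Getting this regrouping exactly right, and confirming that the $A\Omega\wedge\Omega$ contraction is a sum of $2\times 2$ determinants in $\nabla u$, is the crux; once it is done, Wente's lemma finishes the argument mechanically.
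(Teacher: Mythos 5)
Your opening move (take the curl of \eqref{A-B-1} so that $\Delta B^i_j=-\operatorname{curl}(A^i_k\Omega^k_j)$, insert $\Omega^i_j=f^i_{jl}\nabla u^l+g^i_{jl}\nabla^\perp u^l$, and finish with Wente's lemma using $B=0$ on $\partial B_1$) matches the paper, and the $f$-part does work exactly as you say, since $\operatorname{curl}(A^i_kf^k_{jl}\nabla u^l)=\nabla^\perp(A^i_kf^k_{jl})\cdot\nabla u^l$ is an honest Jacobian with both gradients in $L^2$. The genuine gap is in the $g$-part. There $\operatorname{curl}(A^i_kg^k_{jl}\nabla^\perp u^l)=\nabla(A^i_kg^k_{jl})\cdot\nabla u^l+A^i_kg^k_{jl}\,\Delta u^l$, and after substituting $\Delta u^l=-(\Omega\cdot\nabla u)^l$ you are left with terms of the schematic form $A\,g\,f\,\nabla u^p\cdot\nabla u^q$ and $A\,g\,g\,\nabla^\perp u^p\cdot\nabla u^q$: these are only in $L^1$, and the antisymmetries $f^i_{jl}=-f^j_{il}$, $g^i_{jl}=-g^j_{il}$ do \emph{not} rescue them, because the antisymmetry sits in the first two (matrix) indices while the symmetric product $\nabla u^p\cdot\nabla u^q$ sits in the contracted third indices, so no cancellation is forced; the same objection applies to your proposed regrouping via $\nabla A=A\Omega+\nabla^\perp B$, which still produces $fg$-cross terms like $\nabla u^p\cdot\nabla u^q$. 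Moreover, your fallback of ``isolating the Jacobian core and carrying the $L^\infty$ coefficient outside'' is not a legitimate operation: $\Delta w=h\,\nabla a\cdot\nabla^\perp b$ with a non-constant $h\in L^\infty\cap W^{1,2}$ is not covered by Wente, since multiplying an $\mathcal{H}^1$ function by an $L^\infty$ function does not keep it in $\mathcal{H}^1$; Wente needs the coefficient inside the derivative, as in $\nabla^\perp(Af)\cdot\nabla u$.

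The paper's actual device, which your proposal is missing, is to never let a second derivative fall on $u$: before taking the curl of the $g$-term one substitutes $\nabla^\perp u^l=(A^{-1})^l_m\nabla^\perp D^m-(A^{-1})^l_m\nabla E^m$ from the Hodge decomposition \eqref{DandE1} of the conservation law \eqref{AandB2}. Then the $E$-piece becomes a genuine Jacobian $\nabla^\perp(A^i_kg^k_{jl}(A^{-1})^l_m)\cdot\nabla E^m$, while the $D$-piece becomes $-\operatorname{div}(A^i_kg^k_{jl}(A^{-1})^l_m\nabla D^m)$, which is \emph{not} treated by Wente at all but by the improved estimate $\|\nabla D\|_{L^{2,1}(B_1)}\le C\|\nabla u\|_{L^2(B_1)}$ from \eqref{EstimateofD} (Remark \ref{Wen2}), Calder\'on--Zygmund theory in Lorentz spaces, and the embedding of $W^{1,(2,1)}$ into $L^\infty$ (H\'elein, Theorems 3.3.3 and 3.3.4), via the splitting $B=F+G$. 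Finally, note that the resulting bound is linear, $\|B\|_{L^\infty}\le C\|\nabla u\|_{L^2}$, not quadratic as you predicted: the smallness enters only through the single factor $\nabla u$ (resp.\ $\nabla D$, $\nabla E$), since the coefficient gradients $\nabla(Af)$, $\nabla(AgA^{-1})$ are merely bounded by $C$, which is exactly why the proposition is stated with $C\sqrt{\varepsilon_0}$.
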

\begin{proof}
We recall that $\Omega$ is given by
$$\Omega^i_j = f^i_{jl} \nabla u^l + g^i_{jl} \nabla^{\perp} u^l\,.$$
Moreover $\|\Omega\|^2_{L^2(B_1)} \leq C\varepsilon_0$ by the assumptions.
In the following we let $\varepsilon_0$ be so small that Theorems \ref{PCloseTOID} and \ref{PCloseTOID299} apply. By \eqref{DandE1} we have
\begin{equation}\label{DandE3}
\nabla^\perp u = A^{-1}\nabla^\perp D - A^{-1}\nabla E\,.
\end{equation}
Namely, with indices we have
\begin{equation}\label{P3}
\nabla^\perp u^{l}\,=\, (A^{-1})^{l}_{m}\nabla^\perp D^m  - (A^{-1})^{l}_{m} \nabla E^m\,,\quad l=1,2,...,n .
\end{equation}
Taking the curl on both sides of equation \eqref{A-B-1} and using the above expression for $\nabla^\perp u$ yields (for $1\leq i, j\leq n$)
\begin{align}\label{DandE6}
\Delta B^i_j\,= &\,- \text{curl }(A^i_k \Omega^k_j)\,=\, - \text{curl }(A^i_k ( f^k_{jl} \nabla u^l + g^k_{jl} \nabla^{\perp} u^l))\notag\\
=& \,-\nabla^{\perp} (A^i_k f^k_{jl}) \cdot \nabla u^l  - \text{curl }(A^i_k g^k_{jl} ((A^{-1})^{l}_{m}\nabla^{\perp} D^m - (A^{-1})^{l}_{m} \nabla E^m))\notag\\
=&\,-\nabla^{\perp} (A^i_k f^k_{jl}) \cdot \nabla u^l + \nabla^{\perp} (A^i_k g^k_{jl}(A^{-1})^{l}_{m})\cdot \nabla E^m \\
&\,- \text{div }(A^i_k g^k_{jl} ((A^{-1})^{l}_{m}\nabla D^m).\notag
\end{align}
Next we let $F^i_j,G^i_j\in W^{1,2}_0(B_1)$ be solutions of
\begin{align*}
 \Delta F^i_j=& -\nabla^{\perp} (A^i_k f^k_{jl}) \cdot \nabla u^l + \nabla^{\perp} (A^i_k g^k_{jl}(A^{-1})^{l}_{m})\cdot \nabla E^m\ \ \text{resp.} \\
\Delta G^i_j=& - \text{div }(A^i_k g^k_{jl} ((A^{-1})^{l}_{m}\nabla D^m).
\end{align*}
Using Lemma \ref{Wen} we get the estimate
\[
 ||F^i_j||_{L^{\infty}(B_1)} \le C||\nabla u||_{L^2(B_1)}.
\]
Next we note that $||A^i_k g^k_{jl} ((A^{-1})^{l}_{m}\nabla D^m||_{L^{2,1}(B_1)} \le C||\nabla D||_{L^{2,1}(B_1)} \le C||\nabla u||_{L^2(B_1)}$ and hence, using Theorem $3.3.3$ of \cite{He1} (which implies that the standard $L^p$-theory extends to Lorentz spaces), we furthermore get
\[
 ||\nabla G^i_j||_{L^{2,1}(B_1)} \le C||\nabla u||_{L^2(B_1)}.
\]
By Theorem $3.3.4$ in \cite{He1} we conclude that
\begin{align*}
 ||G^i_j||_{L^{\infty}(B_1)}\le& C(||G^i_j||_{L^{2,1}(B_1)}+ ||\nabla G^i_j||_{L^{2,1}(B_1)})\\
\le& C||\nabla G^i_j||_{L^{2,1}(B_1)} \\
\le& C||\nabla u||_{L^2(B_1)},
\end{align*}
where we used again Theorem $3.3.3$ of \cite{He1} (which ensures that the Poincar$\acute{\text{e}}$'s inequality extends to Lorentz spaces) and the fact that $G^i_j=0$ on $\partial B_1$ in the second estimate.

Combining these estimates and using that $B^i_j=F^i_j+G^i_j$ gives
\[
 ||B^i_j||_{L^{\infty}(B_1)} \le C||\nabla u||_{L^2(B_1)}.
\]

Note that in the case $g^k_{jl}\equiv 0$ the proof simplifies since we don't need to use Lorentz spaces in order to get the desired result.
\end{proof}

\begin{corollary}\label{CORO1}
Under the assumptions of Theorem \ref{THMTL} there exist $a \in W^{1,2}(B_1, M_n(\mathbb{R}))$ and $b\in W^{1,2}(B_1, \mathbb{R}^n)$ such that
\begin{equation}
\Delta u\,=\, \nabla a \cdot \nabla^\perp b \quad \quad \text{in }\,\, B_1\,
\end{equation}
and
\[
 ||\nabla a||_{L^2(B_1)}+||\nabla b||_{L^2(B_1)} \le C||\nabla u||_{L^2(B_1)}.
\]
In particular $u$ is continuous in $B_1$.
\end{corollary}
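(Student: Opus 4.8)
The goal is to write $\Delta u$ as a single Jacobian-type product $\nabla a\cdot\nabla^\perp b$, so I start from the conservation law \eqref{AandB2}, namely $\dv(A\nabla u+B\nabla^\perp u)=0$. This says the vector field $A\nabla u+B\nabla^\perp u$ is divergence-free on the simply connected disk $B_1$, so by the Poincar\'e lemma there exists a potential $C\in W^{1,2}(B_1,\mathbb R^n)$ (component-wise) with $\nabla^\perp C = A\nabla u + B\nabla^\perp u$, and by the standard Hodge/Poincar\'e estimate $\|\nabla C\|_{L^2(B_1)}\le C\|A\nabla u+B\nabla^\perp u\|_{L^2(B_1)}\le C\|\nabla u\|_{L^2(B_1)}$, using $A\in L^\infty$, $B\in L^\infty$ (the latter by Proposition \ref{XI}). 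The plan is then to solve for $\nabla u$: since $A=(\widehat A+\mathrm{Id})P^T$ is invertible with $A^{-1}$ bounded in $L^\infty\cap W^{1,2}$, we get
\begin{equation*}
\nabla u \,=\, A^{-1}\nabla^\perp C - A^{-1}B\,\nabla^\perp u\,.
\end{equation*}

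Next I would take the divergence of this identity. Writing $\dv$ of the first term, $\dv(A^{-1}\nabla^\perp C) = \nabla(A^{-1})\cdot\nabla^\perp C$ because $\dv\nabla^\perp C=0$; this is already a Jacobian-structured term with both factors in $W^{1,2}$ and controlled $L^2$ norms of their gradients. For the second term, $\dv(A^{-1}B\nabla^\perp u) = \nabla(A^{-1}B)\cdot\nabla^\perp u + A^{-1}B\,\dv(\nabla^\perp u) = \nabla(A^{-1}B)\cdot\nabla^\perp u$, again since $\dv\nabla^\perp u = 0$; here $A^{-1}B\in W^{1,2}$ with $\|\nabla(A^{-1}B)\|_{L^2}\le C\|\nabla u\|_{L^2}$ (product rule, using $B,A^{-1}\in L^\infty\cap W^{1,2}$), and $\nabla^\perp u = \nabla^\perp u$ is of the form $\nabla^\perp(\text{something})$ only after one more potential step — but in fact $u$ itself is the potential, so $\nabla^\perp u$ is literally $\nabla^\perp$ of a $W^{1,2}$ function. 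Assembling, $\Delta u = \dv(\nabla u) = \nabla(A^{-1})\cdot\nabla^\perp C - \nabla(A^{-1}B)\cdot\nabla^\perp u$, which is a sum of two terms each of the form $\nabla(\cdot)\cdot\nabla^\perp(\cdot)$ with $W^{1,2}$ factors. One then packages the two terms into a single pair $(a,b)$ by stacking into block matrices/vectors (e.g. $a = (A^{-1}, -A^{-1}B)$ arranged appropriately and $b = (C, u)$), so that $\nabla a\cdot\nabla^\perp b$ reproduces the sum; the norm bounds follow termwise.

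The continuity of $u$ then follows: the right-hand side $\nabla a\cdot\nabla^\perp b$ is a sum of Jacobians of $W^{1,2}$ functions, hence lies in the Hardy space $\mathcal H^1(B_1)$ by the Coifman--Lions--Meyer--Semmes result \cite{CLMS}; solving $\Delta u = (\text{something in }\mathcal H^1)$ locally and invoking the improvement $\mathcal H^1 \Rightarrow W^{2,1}_{loc} \hookrightarrow C^0$ in two dimensions (exactly as in \eqref{DandE4}) gives $u\in C^0(B_1)$. Alternatively one can cite Wente's lemma \ref{Wen} directly for the local $L^\infty$ bound on solutions of such Jacobian equations.

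The main obstacle I anticipate is purely bookkeeping: making sure the potential $C$ from the conservation law has the right boundary behavior is \emph{not} needed (we only want an interior/global $W^{1,2}$ statement, not a boundary one), but one must be careful that the product and chain rules applied to $A^{-1}$, $A^{-1}B$ are justified in $W^{1,2}\cap L^\infty$ — this is standard since products of $W^{1,2}\cap L^\infty$ functions stay in $W^{1,2}\cap L^\infty$ in two dimensions with the expected estimates, and $A^{-1}=P(\widehat A+\mathrm{Id})^{-1}$ inherits $W^{1,2}\cap L^\infty$ regularity from $P$ and $\widehat A$ via \eqref{P-2} and \eqref{A-B-2} once $\varepsilon_0$ is small enough that $\|\widehat A\|_{L^\infty}\le \tfrac12$. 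So the real content is just the observation that the conservation law lets one trade $A\nabla u + B\nabla^\perp u$ for $\nabla^\perp C$ and then differentiate, turning $\Delta u$ into a genuine Jacobian; everything else is routine.
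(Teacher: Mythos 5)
Your argument is essentially identical to the paper's proof: the paper also uses the conservation law \eqref{AandB2} to produce a potential $\eta$ (your $C$) with $\nabla^\perp\eta = A\nabla u + B\nabla^\perp u$ and the $L^2$ bound, multiplies by $A^{-1}$, takes the divergence to get $\Delta u^l = \nabla (A^{-1})^l_k\cdot\nabla^{\perp}\eta^k - \nabla (A^{-1}B)^l_k\cdot\nabla^{\perp}u^k$, and concludes continuity via Wente's lemma. Your additional remarks on stacking the two terms into a single pair $(a,b)$ and on the $W^{1,2}\cap L^\infty$ algebra for $A^{-1}$ and $A^{-1}B$ are correct and just make explicit what the paper leaves implicit.
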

\begin{proof}
Equation \eqref{AandB2} and the above estimates for the $L^\infty$-norms of $A$ and $B$ imply the existence of $\eta \in W^{1,2}(B_1, \mathbb{R}^n)$ such that
\[
 ||\nabla \eta||_{L^2(B_1)} \le C ||\nabla u||_{L^2(B_1)}
\]
and
\begin{align}
 \nabla^\perp \eta =A\nabla u +B \nabla^\perp u. \label{eta}
\end{align}
Multiplying this equation with $A^{-1}$ and taking the divergence yields
\begin{equation}
\Delta u^l\,=\, \nabla (A^{-1})^l_{k}\cdot \nabla^{\perp} \eta^k - \nabla (A^{-1} B)^l_{k}\cdot \nabla^{\perp} u^k\,,\quad l=1,2,...,n\,.
\end{equation}
The continuity of $u$ now follows from Wente's lemma \ref{Wen}.
\end{proof}

\subsection{Improved local estimate for the matrix $P$}\label{MatrixU}
We next show that $\Delta P$ also has a special Jacobian structure under the assumptions of Theorem \ref{THMTL}.
\begin{lemma}\label{P2}
Under the assumptions of Theorem \ref{THMTL} there exist $\xi\in W_0^{1,2}(B_1, so(n))$, $\eta\in W^{1,2}(B_1,\mathbb{R}^n)$ and $Q_k, R_k\in W^{1,2}(B_1, Gl_n(\mathbb{R})), k =1,...,n$ with
$$
\|\nabla \xi\|_{L^2(B_1)} +  \|\nabla \eta\|_{L^{2}(B_1)} \leq C\|\nabla u\|_{L^2(B_1)}$$
and
$$\sum_k \left(\|\nabla Q_k\|_{L^{2}(B_1)} + \|\nabla R_k\|_{L^{2}(B_1)}\right) \leq C
$$
such that
\begin{equation}\label{P4}
\Delta P \,= \,\nabla P \cdot\nabla^{\perp} \xi+ \nabla Q_k \cdot \nabla^{\perp} \eta^k + \nabla R_k \cdot \nabla^{\perp} u^k\,.
\end{equation}
\end{lemma}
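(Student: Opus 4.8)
The plan is to take the divergence of the gauge identity \eqref{P-4} and, in the resulting expression, to replace the (only $L^2$-controlled) factor $\nabla u^l$ by $(A^{-1})^l_k\nabla^\perp\eta^k$ modulo a term proportional to $\nabla^\perp u$, using the conservation law \eqref{eta} from Corollary \ref{CORO1}. Once this is done, the pointwise identity $\text{div}\,\nabla^\perp\equiv 0$ kills every genuine second-order term and leaves only Jacobian-type products $\nabla(\,\cdot\,)\cdot\nabla^\perp(\,\cdot\,)$, which is exactly the structure \eqref{P4} asserts. This is the same mechanism already exploited for $\Delta B$ in Subsection \ref{MatrixB}.

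Concretely, I would start from $\nabla P^i_j = P^i_m\nabla^\perp\xi^m_j - \Omega^i_z P^z_j$ and apply $\text{div}$. Since $\text{div}\,\nabla^\perp\xi^m_j = 0$, the first term contributes exactly $\nabla P^i_m\cdot\nabla^\perp\xi^m_j$, the first summand on the right of \eqref{P4}, with $\xi$ the matrix supplied by Theorem \ref{PCloseTOID}; the bound $\|\nabla\xi\|_{L^2}\le C\|\Omega\|_{L^2}\le C\|\nabla u\|_{L^2}$ then follows from \eqref{P-2} together with $\Omega^i_j = f^i_{jl}\nabla u^l + g^i_{jl}\nabla^\perp u^l$ and the hypotheses on $f,g$. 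For the remaining term I would write $\Omega^i_z P^z_j = f^i_{zl}P^z_j\nabla u^l + g^i_{zl}P^z_j\nabla^\perp u^l$. The $g$-piece is already of the desired form, $\text{div}(g^i_{zl}P^z_j\nabla^\perp u^l) = \nabla(g^i_{zl}P^z_j)\cdot\nabla^\perp u^l$; for the $f$-piece I would substitute $\nabla u^l = (A^{-1})^l_k\nabla^\perp\eta^k - (A^{-1}B)^l_k\nabla^\perp u^k$, obtained by multiplying \eqref{eta} by $A^{-1}$, which turns $\text{div}(f^i_{zl}P^z_j\nabla u^l)$ into $\nabla(f^i_{zl}P^z_j(A^{-1})^l_k)\cdot\nabla^\perp\eta^k - \nabla(f^i_{zl}P^z_j(A^{-1}B)^l_k)\cdot\nabla^\perp u^k$. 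Collecting all contributions and setting $(Q_k)^i_j := -f^i_{zl}P^z_j(A^{-1})^l_k$ and $(R_k)^i_j := f^i_{zl}P^z_j(A^{-1}B)^l_k - g^i_{zk}P^z_j$ then yields \eqref{P4}.

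It remains to check the norms, which is routine. Since $P\in SO(n)$ pointwise we have $\|P\|_{L^\infty}\le\sqrt n$ and $\|\nabla P\|_{L^2}\le C\|\Omega\|_{L^2}\le C$ by \eqref{P-2}; the matrix $A = (\widehat{A} + Id)P^T$ is invertible with $\|A^{-1}\|_{L^\infty} + \|\nabla(A^{-1})\|_{L^2}\le C$ because $\|\widehat{A}\|_{L^\infty}$ is small and $\|\widehat{A}\|_{W^{1,2}}\le C\|\Omega\|_{L^2}$ by \eqref{A-B-2}; Proposition \ref{XI} and \eqref{A-B-2} give $\|B\|_{L^\infty}\le C\sqrt{\varepsilon_0}$ and $\|\nabla B\|_{L^2}\le C$; and $\|f\|_{L^\infty} + \|\nabla f\|_{L^2} + \|g\|_{L^\infty} + \|\nabla g\|_{L^2}\le C$ by assumption. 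Hence each $Q_k,R_k$ is a product of factors that are bounded in $L^\infty$ and have gradient bounded in $L^2$ by universal constants, so $\sum_k(\|\nabla Q_k\|_{L^2} + \|\nabla R_k\|_{L^2})\le C$, while $\|\nabla\eta\|_{L^2}\le C\|\nabla u\|_{L^2}$ is precisely the bound from Corollary \ref{CORO1}. The only step that requires a genuine idea is the substitution for $\nabla u^l$ via the conservation law — it is what forces the right-hand side of \eqref{P4} to consist of Jacobians rather than an uncontrolled divergence term — and I expect no other difficulty.
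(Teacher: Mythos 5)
Your proposal is correct and follows essentially the same route as the paper: take the divergence of \eqref{P-4}, substitute $\nabla u = A^{-1}\nabla^{\perp}\eta - A^{-1}B\,\nabla^{\perp}u$ from \eqref{eta} into the $f$-part of $\Omega P$, and define $Q_k$, $R_k$ exactly as the paper does, with the norm bounds following from \eqref{P-2}, \eqref{A-B-2}, Proposition \ref{XI} and Corollary \ref{CORO1}. Your index choice $-g^i_{zk}P^z_j$ in $R_k$ is in fact the correctly relabelled version of the paper's formula, so no issues remain.
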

\begin{proof}
Taking the divergence on both sides of equation \eqref{P-4} yields
\begin{equation} \label{P1}
\Delta P^i_{j} \,=\, \nabla P^i_m\cdot\nabla^{\perp} \xi^m_j - \text{div }(\Omega^i_z \,P^z_j)\,,\quad 1\leq i,j\leq n\,.
\end{equation}

Now combining $\Omega^i_z = f^i_{zl} \nabla u^l + g^i_{zl} \nabla^{\perp} u^l$ with equation \eqref{eta} gives
\begin{align*}
&\Delta P^i_{j} \,=\,\nabla P^i_m\cdot\nabla^{\perp} \xi^m_j - \text{div }(\Omega^i_z \,P^z_j)\\
= &\,\nabla P^i_m\cdot\nabla^{\perp} \xi^m_j - \text{div }\left[f^i_{zl}\,\left[(A^{-1})^{l}_{k}\nabla^{\perp} \eta^k - (A^{-1} B)^{l}_{k} \nabla^{\perp} u^k\right]\, P^z_j +  g^i_{zl} \nabla^{\perp} u^l  P^z_j\right]\\
=& \,\nabla P^i_m\cdot\nabla^{\perp} \xi^m_j - \nabla \left[f^i_{zl} P^z_j (A^{-1})^l_k\right]\cdot \nabla^{\perp} \eta^k + \nabla \left[f^i_{zl} P^z_j (A^{-1} B)^l_k\right]\cdot \nabla^{\perp} u^k \\
&- \nabla(g^i_{zl}P^z_j)\cdot \nabla^{\perp} u^l \,.\notag
\end{align*}
Defining $(Q_k)^i_j = -f^i_{zl} P^z_j (A^{-1})^l_k$ and $(R_k)^i_j=f^i_{zl} P^z_j (A^{-1} B)^l_k -g^i_{zl}P^z_j$ where $1\leq k, i, j\leq n$, completes the proof.
\end{proof}

Next, based on Lemma \ref{P2}, we prove a local estimate on the oscillation of the matrix $P$. As we shall see, the Jacobian structure of $\Delta P$ enters in a crucial way.
\begin{lemma}\label{L21EST}
Let $u$ and $\Omega$ satisfy the assumptions of Theorem \ref{THMTL}. Then for any $x\in B_1$, any $r>0$ such that $B_{2r}(x) \subset B_1$ and any $y\in B_{r}(x)$ we have
\begin{equation}\label{TEMO2011}
|P(y)- P(x)|\,\leq\, C \sqrt{\varepsilon_0}\,.
\end{equation}
\end{lemma}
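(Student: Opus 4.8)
The plan is to derive a pointwise oscillation bound for $P$ from the PDE \eqref{P4} established in Lemma \ref{P2}, using the fact that every term on the right-hand side is a Jacobian-type product of two gradients, each controlled in $L^2$. Since $P \in W^{1,2}\cap L^\infty(B_1,SO(n))$ and we want a local bound, I would work on a ball $B_{2r}(x)\subset B_1$ and split $P$ on $B_{2r}(x)$ into a part solving an inhomogeneous Dirichlet problem with the Jacobian right-hand side and a harmonic part. Concretely, let $\varphi$ solve $\Delta\varphi = \nabla P\cdot\nabla^\perp\xi + \nabla Q_k\cdot\nabla^\perp\eta^k + \nabla R_k\cdot\nabla^\perp u^k$ on $B_{2r}(x)$ with $\varphi = 0$ on $\partial B_{2r}(x)$, so that $h := P-\varphi$ is harmonic on $B_{2r}(x)$.

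For the inhomogeneous part, each summand on the right-hand side is a null Lagrangian (Jacobian determinant) of two $W^{1,2}$ functions, so Wente's lemma \ref{Wen} (applied on $B_{2r}(x)$ with zero boundary data, using scale invariance of the Wente estimate in two dimensions) gives
\[
\|\varphi\|_{L^\infty(B_{2r}(x))} \leq C\Big(\|\nabla P\|_{L^2}\|\nabla\xi\|_{L^2} + \sum_k \|\nabla Q_k\|_{L^2}\|\nabla\eta^k\|_{L^2} + \sum_k\|\nabla R_k\|_{L^2}\|\nabla u^k\|_{L^2}\Big),
\]
all norms over $B_{2r}(x)\subset B_1$. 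Using \eqref{P-2} for $\|\nabla P\|_{L^2}$ and $\|\nabla\xi\|_{L^2}$, the bounds $\|\nabla\eta\|_{L^2},\|\nabla\xi\|_{L^2}\leq C\|\nabla u\|_{L^2}$ and $\sum_k(\|\nabla Q_k\|_{L^2}+\|\nabla R_k\|_{L^2})\leq C$ from Lemma \ref{P2}, together with $\|\Omega\|_{L^2}\leq C\sqrt{\varepsilon_0}$ and $\|\nabla u\|_{L^2}\leq C\sqrt{\varepsilon_0}$, every factor that is not already a universal constant is $O(\sqrt{\varepsilon_0})$, so $\|\varphi\|_{L^\infty(B_{2r}(x))}\leq C\sqrt{\varepsilon_0}$ (the worst term, $\|\nabla P\|_{L^2}\|\nabla\xi\|_{L^2}$, is $O(\varepsilon_0)$; the mixed terms are $O(\sqrt{\varepsilon_0})$). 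In particular $\osc_{B_{2r}(x)}\varphi \leq C\sqrt{\varepsilon_0}$.

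For the harmonic part $h = P-\varphi$, I would use interior gradient estimates for harmonic functions. Since $\|\nabla h\|_{L^2(B_{2r}(x))} \leq \|\nabla P\|_{L^2(B_1)} + \|\nabla\varphi\|_{L^2(B_{2r}(x))} \leq C\|\Omega\|_{L^2(B_1)} \leq C\sqrt{\varepsilon_0}$ (the $\|\nabla\varphi\|_{L^2}$ bound coming from the same Wente estimate, which also controls the $W^{1,2}$-seminorm), mean-value/interior estimates give $\sup_{B_r(x)}|\nabla h| \leq C r^{-1}\|\nabla h\|_{L^2(B_{2r}(x))} \cdot r^{-1}\cdot r = C r^{-1}\|\nabla h\|_{L^2(B_{2r}(x))}$; more carefully, $\|\nabla h\|_{L^\infty(B_r(x))}^2 \leq C r^{-2}\fint_{B_{2r}(x)}|\nabla h|^2 \leq C r^{-4}\|\nabla h\|_{L^2(B_{2r}(x))}^2$, hence for $y\in B_r(x)$,
\[
|h(y)-h(x)| \leq \|\nabla h\|_{L^\infty(B_r(x))}\,|y-x| \leq C r^{-2}\|\nabla h\|_{L^2(B_{2r}(x))}\cdot r = C r^{-1}\|\nabla h\|_{L^2(B_{2r}(x))}.
\]
This carries an unwanted factor $r^{-1}$, so the naive splitting does not immediately close; the fix is to rescale. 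Setting $\tilde P(z) = P(x+rz)$ on $B_2\subset B_1$, the rescaled quantities $\tilde\xi,\tilde\eta,\tilde u,\tilde Q_k,\tilde R_k$ satisfy the analogue of \eqref{P4} on $B_2$ with $\|\nabla\tilde P\|_{L^2(B_2)} = \|\nabla P\|_{L^2(B_{2r}(x))}$ and all other seminorms also scale-invariantly (the $L^2$-norm of a gradient in two dimensions is scale-invariant, and the right-hand side products are jointly scale-invariant). Thus on $B_2$ I get $\|\tilde\varphi\|_{L^\infty(B_2)}\leq C\sqrt{\varepsilon_0}$ and $\|\nabla\tilde h\|_{L^2(B_2)}\leq C\sqrt{\varepsilon_0}$ with the SAME constants, and the harmonic estimate on the fixed annulus $B_1\subset B_2$ gives $|\tilde h(\tilde y)-\tilde h(0)|\leq C\|\nabla\tilde h\|_{L^2(B_2)}\leq C\sqrt{\varepsilon_0}$ for $\tilde y\in B_1$ with a universal $C$. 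Translating back, $|P(y)-P(x)| \leq |\tilde h(\tilde y)-\tilde h(0)| + 2\|\tilde\varphi\|_{L^\infty(B_2)} \leq C\sqrt{\varepsilon_0}$ for $y\in B_r(x)$, which is \eqref{TEMO2011}. The main obstacle is precisely this scaling bookkeeping: one must verify that all ingredients of Lemma \ref{P2} and the Wente estimate are two-dimensionally scale-invariant so that the constant $C$ in the final bound depends only on $\mathcal N$ and not on $x$ or $r$; once the scale-invariance is in place, the argument is a routine harmonic-replacement plus Wente estimate.
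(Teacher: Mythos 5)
Your proposal is correct and is essentially the paper's own argument: decompose $P$ into a Wente part, bounded in $L^\infty$ by $C\sqrt{\varepsilon_0}$ using Lemma \ref{P2}, Lemma \ref{Wen} and \eqref{P-2}, plus a harmonic remainder whose oscillation on $B_r(x)$ is controlled by interior gradient estimates and the smallness of $\|\nabla P\|_{L^2}$. The only difference is cosmetic: the paper does the decomposition once on all of $B_1$ and uses $|V(y)-V(x)|\le Cr\,\|\nabla V\|_{L^\infty(B_r(x))}\le C\|\nabla V\|_{L^2(B_{2r}(x))}$; the ``unwanted factor $r^{-1}$'' that drives you to rescale comes from miscounting a power of $r$ (the mean-value estimate gives $\sup_{B_r(x)}|\nabla h|\le Cr^{-1}\|\nabla h\|_{L^2(B_{2r}(x))}$, not $Cr^{-2}$), so your local splitting already closes without rescaling, though the rescaling step you add is itself valid and yields the same conclusion.
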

\begin{proof}
Let $\widetilde{P} \in W^{1,2}(B_1, M_n(\mathbb{R}))$ be the weak solution of
\begin{equation}
  \left\{
   \aligned
   \Delta \widetilde{P}\,=\,& \,\nabla P \cdot\nabla^{\perp} \xi + \nabla Q_k \cdot \nabla^{\perp} \eta^k + \nabla R_k \cdot \nabla^{\perp} u^k  &&\text{in  } B_1 \,, \\
   \widetilde{P}  \, = \, &0  &&\text{on  }\partial B_1\,,\\
   \endaligned
  \right.
\end{equation}
where $Q_k$ and $R_k$ are as in Lemma \ref{P2}.

Then by Wente's lemma \ref{Wen} we have $\widetilde{P}\in C^0(B_1, M_n(\mathbb{R}))$ and
\begin{equation}\label{100}
\|\widetilde{P}\|_{L^\infty(B_1)}+\|\nabla \widetilde{P}\|_{L^2(B_1)} \,\leq \, C\sqrt{\varepsilon_0}\,.
\end{equation}

Since $\Delta (P - \widetilde{P}) = 0$ in $B_1$, we know that $V = P - \widetilde{P} \in C^{\infty}(B_1, M_n(\mathbb{R}))$ is harmonic. Now for any $x\in B_1$, any $r>0$ such that $B_{2r}(x) \subset B_1$ and any $y\in B_{r}(x)$ we have
\begin{align}\label{1000}
|V(y)-V(x)|\leq& Cr\|\nabla V\|_{L^{\infty}(B_r(x))} \notag\\
\leq& \, C  \|\nabla V\|_{L^2(B_{2r}(x))}\\
\,\leq\,& C \left(\|\nabla P\|_{L^2(B_{2r}(x))} + \|\nabla \widetilde{P}\|_{L^2(B_{2r}(x))}\right)\,\leq\,C \sqrt{\varepsilon_0} \,,\notag
\end{align}
where we used the mean value property of $V$ and \eqref{100}, \eqref{P-2}. Combining \eqref{100} and \eqref{1000} yields that for any $x\in B_1$, any $r>0$ such that $B_{2r}(x) \subset B_1$ and any $y\in B_{r}(x)$ we have
\begin{equation}
|P(y)- P(x)|\,\leq\, C \sqrt{\varepsilon_0}\,,
\end{equation}
which gives the desired result.
\end{proof}

\section{Proof of Theorem \ref{THMTL}}\label{LastSec}
With the results of section \ref{4} at our disposal, we are now in a position to prove Theorem \ref{THMTL}. The local estimate on the oscillation of the transformation matrix $P$ in Lemma \ref{L21EST} turns out to be the key ingredient of our proof.

\begin{proof}(of Theorem \ref{THMTL})
Using Theorem \ref{PCloseTOID}, Theorem \ref{PCloseTOID299} and Proposition \ref{XI}, for any $x\in B_1$, any $r>0$ such that $B_{2r}(x) \subset B_1$ and any $y\in B_{r}(x)$ we have (choosing $\varepsilon_0$ sufficiently small)
\begin{align}
0&\leq \,\frac{1}{2}|\nabla u|^2(y) \leq \left(A\nabla u + B\nabla^{\perp} u\right) \cdot (P^T\nabla u)(y)\notag\\
&=  \left(A\nabla u + B\nabla^{\perp} u\right)\cdot \left[\left(P^T(x) + \left(P^T-P^T(x)\right)\right)\nabla u\right](y)\,,
\end{align}
and therefore by Lemma \ref{L21EST} and \eqref{eta}
\begin{align}
&\nabla^\perp\eta \cdot \left(P^T(x)\nabla u\right)(y) = \left(A\nabla u + B\nabla^{\perp} u\right) \cdot \left(P^T(x)\nabla u\right)(y)\notag\\
\geq &\,\frac{1}{2}|\nabla u|^2(y) - \left(A\nabla u + B\nabla^{\perp} u\right)\cdot \left[\left(P^T-P^T(x)\right)\nabla u\right](y)\geq \,\frac{1}{4}|\nabla u|^2(y)\,.\label{PPP}
\end{align}
Now we choose a function
\begin{equation}\label{funcphi}\phi \in C^{\infty}_0(B_1) \text{ with } \phi\geq 0, \,\text{spt}(\phi) \subseteq B_{\frac{1}{2}}, \,\phi =2 \text{ on } B_{\frac{3}{8}}, \text{ and } \int_{B_1} \phi \,dx =1\,.
\end{equation}
Moreover, we additionally assume that $\|\nabla \phi\|_{L^\infty(B_1)}\leq 100$. Using \eqref{PPP}, one verifies directly that (using Definition \ref{10003})
\begin{align*}
\||\nabla u|^2\|_{h^1(B_1)}=& \,\int_{B_1} \sup_{0<t< 1-|x|} \phi_t \ast |\nabla u|^2dx\\
\leq& \,4\int_{B_1} \sup_{0<t< 1-|x|}\phi_t \ast \left(\nabla^\perp\eta \cdot (P^T(x)\nabla u)\right)dx\\
= & \,4\int_{B_1} \sup_{0<t< 1-|x|}\phi_t \ast \left[(P^T(x))_{ij}\left(\nabla^{\perp}\eta^i \cdot \nabla u^j\right)\right]dx\\
\leq &\,C \,\sum_{i,j=1}^n\|\nabla^\perp \eta^i \cdot \nabla u^j\|_{h^1(B_1)}\\
\leq &\, C \|\nabla^{\perp}\eta\|_{L^2(B_1)}\|\nabla u\|_{L^2(B_1)} \leq C \int_{B_1}|\nabla u|^2,
\end{align*}
where we have used the fact
$$
\nabla^\perp \eta^i \cdot \nabla u^j \in h^1(B_1) \quad \text{and} \quad \|\nabla^\perp \eta^i \cdot \nabla u^j\|_{h^1(B_1)}\leq C\|\nabla \eta\|_{L^2(B_1)}\|\nabla u\|_{L^2(B_1)}
$$
for all $i, j = 1,2,...,n.$ To see this, we first extend $\eta^i - \frac{1}{|B_1|}\int_{B_1}\eta^i$ and $u^j - \frac{1}{|B_1|}\int_{B_1}u^j$ from $B_1$ to $\mathbb{R}^2$ which yields the existence of $\tilde{\eta}^i, \tilde{u}^j \in W^{1,2}_0 (\mathbb{R}^2)$ such that
\begin{equation}
\int_{\mathbb{R}^2}|\nabla \tilde{\eta}^i|^2 \leq C \int_{B_1}|\nabla \eta^i|^2\quad \text{and}\quad \int_{\mathbb{R}^2}|\nabla \tilde{u}^j|^2 \leq C \int_{B_1}|\nabla u^j|^2
\end{equation}
and
\begin{equation}\label{10006}
\nabla \tilde{\eta}^i = \nabla \eta^i \quad \text{and} \quad \nabla \tilde{u}^j = \nabla u^j \quad \text{a.e. in } B_1\,.
\end{equation}
Then by the results of \cite{CLMS} we know that
\begin{align}
\|\nabla^\perp \tilde{\eta}^i \cdot \nabla \tilde{u}^j\|_{\mathcal{H}^1(\mathbb{R}^2)}:&= \int_{\mathbb{R}^2} \sup_{\phi\in \mathcal{T}}\sup_{t>0}\left| \int_{B_t(x)}\frac{1}{t^2}\phi\left(\frac{x-y}{t}\right)\left(\nabla^\perp \tilde{\eta}^i \cdot \nabla \tilde{u}^j\right)(y)dy\right|dx\notag\\
& \leq C \|\nabla \tilde{\eta}^i\|_{L^2(\mathbb{R}^2)}\|\nabla \tilde{u}^j\|_{L^2(\mathbb{R}^2)} \leq C \|\nabla \eta\|_{L^2(B_1)}\|\nabla u\|_{L^2(B_1)}\,,\label{10007}
\end{align}
where $\mathcal{T} = \{\phi \in C^{\infty}(\mathbb{R}^2): \text{spt}(\phi)\subset B_1 \text{ and } \|\nabla \phi\|_{L^\infty} \leq 100 \}$. By \eqref{10006}, \eqref{10007} and Definition \ref{10003}, it is clear that
\begin{align}\label{TEMP909}
\|\nabla^\perp \eta^i \cdot \nabla u^j\|_{h^1(B_1)} &= \|\nabla^\perp \tilde{\eta}^i \cdot \nabla \tilde{u}^j\|_{h^1(B_1)}\notag\\
&\leq \|\nabla^\perp \tilde{\eta}^i \cdot \nabla \tilde{u}^j\|_{\mathcal{H}^1(\mathbb{R}^2)} \leq C \|\nabla \eta\|_{L^2(B_1)}\|\nabla u\|_{L^2(B_1)}.
\end{align}
This completes the proof of the theorem.
\end{proof}
In view of the discussions in Section \ref{Sec1}, this also yields an alternative proof of Theorem \ref{THM1}.

\appendix

\section{Wente's lemma and the local Hardy space}\label{APPDEN}

\subsection{Wente's lemma}\label{WENTYLEMMA}

An important ingredient in our estimates is Wente's lemma, see e.g. \cite{BC} and \cite{He1}, and for a generalized version, see e.g. \cite{BG} and \cite{CL}.
\begin{lemma} (\cite{W})\label{Wen}
Let $a,b\in W^{1,2}(B_1,\mathbb{R})$ and let $w$ be the solution of
\begin{equation}
\left\{
\begin{aligned}
\Delta w\, &=\,\frac{\partial a}{\partial y}\frac{\partial b}{\partial x} - \frac{\partial a}{\partial x}\frac{\partial b}{\partial y} \,=\, \nabla a \cdot \nabla^{\perp} b && \text{in } \, B_1\,,\\
w\, &=\,0 \quad \text{or} \quad \frac{\partial w}{\partial \nu}\,=\,0 && \text{on } \, \partial B_1\,.
\end{aligned}
\right.
\end{equation}
Then $w\in C^0\cap W^{1,2}(B_1,\mathbb{R})$ and the following estimate holds
\begin{equation}
\|w\|_{L^\infty(B_1)}+ \|\nabla w\|_{L^2(B_1)}\,\leq\,C\|\nabla a\|_{L^2(B_1)}\|\nabla b\|_{L^2(B_1)}\,,
\end{equation}
where we choose $\int_{B_1} w = 0 $ for the Neumann boundary data. Here $\nabla = (\partial_{x}, \partial_{y})$ and $\nabla^{\perp} = (-\partial_{y}, \partial_{x})$.
\end{lemma}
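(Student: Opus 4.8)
The plan is to prove the a priori estimate for smooth data and pass to the limit by density, after reducing the whole statement to a single scalar $L^\infty$ bound on $\mathbb{R}^2$. First, since the claimed inequality is bilinear and continuous in the $W^{1,2}$ norms of $a$ and $b$, and smooth functions are dense in $W^{1,2}(B_1)$, it suffices to treat $a,b\in C^\infty(\overline{B_1})$; elliptic regularity then gives $w\in C^\infty(\overline{B_1})$, and the limiting $w$ lies in $C^0\cap W^{1,2}$ with the stated bound. Next, I would observe that the $W^{1,2}$ part of the estimate follows at once from the $L^\infty$ part: pairing $\Delta w=\nabla a\cdot\nabla^\perp b$ with $w$ and using the boundary condition ($w=0$, or $\partial_\nu w=0$ together with $\int_{B_1}w=0$) gives $\int_{B_1}|\nabla w|^2=-\int_{B_1}w\,\nabla a\cdot\nabla^\perp b\le\|w\|_{L^\infty(B_1)}\|\nabla a\|_{L^2(B_1)}\|\nabla b\|_{L^2(B_1)}$, so $\|\nabla w\|_{L^2}^2\le\|w\|_{L^\infty}\|\nabla a\|_{L^2}\|\nabla b\|_{L^2}$. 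Finally I would reduce to $\mathbb{R}^2$: extend $a$ and $b$, after subtracting their averages over $B_1$, to functions $\tilde a,\tilde b\in W^{1,2}(\mathbb{R}^2)$ supported in $B_2$ with comparable Dirichlet energies, and set $W:=\Phi*(\nabla\tilde a\cdot\nabla^\perp\tilde b)$ with $\Phi=\frac1{2\pi}\log|\cdot|$. Since $\nabla\tilde a=\nabla a$ and $\nabla\tilde b=\nabla b$ a.e. in $B_1$, the function $w-W$ is harmonic in $B_1$, so by the maximum principle $\|w\|_{L^\infty(B_1)}\le\|W\|_{L^\infty(B_1)}+\|W\|_{L^\infty(\partial B_1)}\le 2\|W\|_{L^\infty(\mathbb{R}^2)}$ (the Neumann case is handled identically after subtracting the harmonic function with the matching normal derivative and normalizing the mean, thereby reducing to Dirichlet data). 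Thus everything comes down to $\|W\|_{L^\infty(\mathbb{R}^2)}\le C\|\nabla\tilde a\|_{L^2}\|\nabla\tilde b\|_{L^2}$.

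For the whole-plane bound I would argue as follows. A priori $\nabla\tilde a\cdot\nabla^\perp\tilde b$ only lies in $L^1(\mathbb{R}^2)$, which by Calder\'on--Zygmund theory would put $W$ in $BMO$ and in every $L^p$ with $p<\infty$ --- \emph{just} short of $L^\infty$. The gain comes from the Jacobian structure: by the compensated-compactness theorem of Coifman--Lions--Meyer--Semmes \cite{CLMS}, $\nabla\tilde a\cdot\nabla^\perp\tilde b\in\mathcal{H}^1(\mathbb{R}^2)$ with $\|\nabla\tilde a\cdot\nabla^\perp\tilde b\|_{\mathcal{H}^1}\le C\|\nabla\tilde a\|_{L^2}\|\nabla\tilde b\|_{L^2}$. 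Granting this, $\partial_i\partial_j W=R_iR_j(\nabla\tilde a\cdot\nabla^\perp\tilde b)$ with $R_i$ the Riesz transforms, which are bounded on $\mathcal{H}^1$, so $\nabla^2 W\in\mathcal{H}^1(\mathbb{R}^2)\subset L^1(\mathbb{R}^2)$; and since $\int_{\mathbb{R}^2}\nabla\tilde a\cdot\nabla^\perp\tilde b=\int_{\mathbb{R}^2}\text{div}(\tilde a\,\nabla^\perp\tilde b)=0$, the potential $W$ and its gradient decay at infinity and are integrable too, so $W\in W^{2,1}(\mathbb{R}^2)$ with norm $\le C\|\nabla\tilde a\|_{L^2}\|\nabla\tilde b\|_{L^2}$. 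The endpoint Sobolev embedding $W^{2,1}(\mathbb{R}^2)\hookrightarrow C^0(\mathbb{R}^2)\cap L^\infty(\mathbb{R}^2)$ then finishes the proof and yields the continuity of $w$ as well. As an alternative that avoids Hardy spaces entirely, one can run Wente's original argument directly on $B_1$: expand $a$, $b$ and $w$ in Fourier series in the angular variable, solve the decoupled ODEs for the Fourier modes $w_k(r)$, and sum the resulting estimates using the orthogonality of $\{e^{ik\theta}\}$ together with a discrete Cauchy--Schwarz inequality.

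The one genuinely nontrivial ingredient --- and the step I expect to be the main obstacle --- is the Hardy-space estimate for the Jacobian (equivalently, the cancellation exploited in Wente's Fourier computation). The mechanism behind it is that $\nabla a\cdot\nabla^\perp b=\text{div}(a\,\nabla^\perp b)=-\text{div}(b\,\nabla^\perp a)$ has divergence form, so when one tests against a rescaled bump $\varphi_\rho$ centered at $x$ to form the grand maximal function, one may replace $a$ by its mean $\overline{a}_{B_\rho(x)}$ at the cost of a term $\overline{a}_{B_\rho(x)}\int\nabla\varphi_\rho\cdot\nabla^\perp b$ plus an oscillation term; the Poincar\'e--Sobolev inequality $\|a-\overline{a}_{B_\rho}\|_{L^2(B_\rho)}\le C\rho\|\nabla a\|_{L^2(B_\rho)}$ together with the $L^2$-boundedness of the Hardy--Littlewood maximal function then yields the $\mathcal{H}^1$ bound after integrating in $x$. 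Once this is in hand, everything else --- the density argument, the $L^\infty\Rightarrow W^{1,2}$ implication, the maximum-principle reduction, the Riesz-transform and Sobolev-embedding steps --- is routine.
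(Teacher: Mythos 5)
The paper never proves this lemma --- it is quoted from Wente \cite{W} (see also \cite{BC}, \cite{He1}) --- so your argument must be measured against the classical proofs rather than an internal one. For the Dirichlet condition your route is correct and is essentially the modern Hardy-space argument that the paper itself invokes around \eqref{DandE2}: extend $a-\bar a$, $b-\bar b$ to $\mathbb{R}^2$, apply the CLMS bound $\|\nabla\tilde a\cdot\nabla^\perp\tilde b\|_{\mathcal H^1}\le C\|\nabla\tilde a\|_{L^2}\|\nabla\tilde b\|_{L^2}$, use Riesz transforms on $\mathcal H^1$ to control $\nabla^2W$ in $L^1$, and compare $w$ with $W$ by the maximum principle; the $L^\infty\Rightarrow W^{1,2}$ step and the density reduction are fine. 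Two inaccuracies in that part: the assertion $W\in W^{2,1}(\mathbb{R}^2)$ is false in general, since $W\sim c|x|^{-1}$ and $\nabla W\sim|x|^{-2}$ at infinity are not integrable; what you need, and what is true, is $\|W\|_{L^\infty(\mathbb{R}^2)}\le\|\partial_1\partial_2W\|_{L^1(\mathbb{R}^2)}$ for a function vanishing at infinity, obtained by writing $W$ as an iterated integral of $\partial_1\partial_2W$. Also your closing heuristic for the CLMS estimate does not close as stated: with the $L^2$ Poincar\'e inequality one only gets a product of square roots of maximal functions of $L^1$ functions, which lies in weak $L^1$; the actual proof uses a Poincar\'e--Sobolev inequality with exponent $p<2$. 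Since you cite \cite{CLMS} rather than reprove it, this is harmless.

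The genuine gap is the Neumann case, which you declare to be ``handled identically''. The reduction you sketch --- subtract a harmonic $h$ with $\partial_\nu h=\partial_\nu W$ on $\partial B_1$, so that $w-W+h$ is constant --- requires an $L^\infty$ bound on $h$, i.e.\ on the solution of a Neumann problem with boundary datum $\partial_\nu W|_{\partial B_1}$. Your construction gives no control of this trace beyond (at best) $L^1(\partial B_1)$, because $\tilde a,\tilde b$ do not vanish near $\partial B_1$ and you only know $\nabla^2W\in L^1$; and the harmonic extension operator with $L^1$ Neumann data is not bounded into $L^\infty$ (the Neumann kernel has a logarithmic singularity). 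So that half of the statement is not proved. The standard fixes are either Wente's original angular Fourier decomposition, which treats $w=0$ and $\partial_\nu w=0$ on the same footing (you mention it but do not carry it out), or an even reflection of $a$ and $b$ across $\partial B_1$ via the inversion $x\mapsto x/|x|^2$, which preserves the Jacobian structure up to sign and reduces the Neumann estimate to an interior one. One should also note that solvability of the Neumann problem presupposes the compatibility condition $\int_{B_1}\nabla a\cdot\nabla^\perp b\,dx=0$, which the statement implicitly assumes by speaking of ``the solution''. Since the paper only ever applies the Dirichlet case, your argument does cover all of its uses of the lemma, but not the lemma as stated.
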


\begin{remark}\label{Wen2}
In fact, by Theorem $3.4.1$ in \cite{He1}, we also have the following estimate for $\nabla w$ in $L^{2,1}$:
\begin{equation}
\|\nabla w\|_{L^{2,1}(B_1)}\,\leq\, C \|\nabla a\|_{L^2(B_1)}\|\nabla b\|_{L^2(B_1)}\,.
\end{equation}
\end{remark}

\subsection{Local Hardy space $h^1(B_1)$}\label{HardySpace1}
Let us first recall the definition of the local Hardy space $h^1(B_1)$.
\begin{definition}\label{10003}(\cite{Mi}) Choose a Schwartz function $\phi \in C^{\infty}_0(B_1)$ such that $\int_{B_1} \phi \,dx =1$ and let $\phi_t(x) = t^{-2}\phi\left(\frac{x}{t}\right)$. For a measurable function $f$ defined in $B_1$ we say that $f$ lies in the local Hardy space $h^1(B_1)$ if the radial maximal function of $f$
\begin{equation}\label{Maxima}
f^{\ast}(x) = \sup_{0<t< 1-|x|}\left| \int_{B_t(x)}\frac{1}{t^2}\phi\left(\frac{x-y}{t}\right) f(y)dy\right|= \sup_{0<t< 1-|x|}\left| \phi_t \ast f\right|(x)
\end{equation}
belongs to $L^1(B_1)$ and we define
\begin{equation}
\|f\|_{h^1(B_1)} = \|f^{\ast}(x)\|_{L^1(B_1)}\,.
\end{equation}
It follows immediately that $h^1(B_1)$ is a strict subspace of $L^1(B_1)$ and $\|f\|_{L^1(B_1)}\leq \|f\|_{h^1(B_1)}$.
\end{definition}

Next we state an existence and regularity result for boundary value problems in the local Hardy space, which follows along the lines of a corresponding result in \cite{Sem}. For a more general version we refer to Chang, Krantz and Stein's work in \cite{CKS}{\footnote{By the results of \cite{Mi}, the local Hardy space $h^1(B_1)$ is just the local Hardy space $\mathcal{H}^1_r(B_1) \subset h^1_r(B_1)$ (a function in $\mathcal{H}^1_r(B_1)$ is obtained by restricting a function in $\mathcal{H}^1(\mathbb{R}^2)$ to $B_1$) in \cite{CKS}, therefore \cite[Theorem 5.1]{CKS} is applicable in our case.}}. In order to make the paper self-contained, we decided to include the proof of the result below.

\begin{theorem}\label{CDSTHM}(cf. \cite[Theorem 1.100]{Sem} and \cite[Theorem 5.1]{CKS})
Let $f\in h^1(B_1)$ and assume that $f\geq 0$ a.e. in $B_1$. Then there exists a function $\psi \in L^\infty\cap W^{1,2}(B_1)$ solving the Dirichlet problem
 \begin{equation}
  \left\{
   \begin{aligned}
     \Delta \psi\,&=\, f && \text{in }\, B_1\,, \\
    \psi\,&=\, 0 && \text{on }\, \partial B_1\,.\\
   \end{aligned}
 \right.
   \end{equation}
Moreover, there exists a constant $C>0$ such that
\begin{equation}\label{psii1}
\|\psi\|_{L^{\infty}(B_1)}  + \|\nabla \psi\|_{L^2(B_1)} \,\leq\, C \,\|f\|_{h^1(B_1)}\,.
\end{equation}
\end{theorem}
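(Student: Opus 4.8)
The plan is to construct the solution first on all of $\mathbb{R}^2$ by a Newtonian‑potential formula, using the Hardy‑space hypothesis exactly at the Calder\'on--Zygmund step, and then to correct the boundary values. First I would pass to $\mathbb{R}^2$: by the extension property of local Hardy spaces (\cite{Mi}) there is $\tilde f\in h^1(\mathbb{R}^2)$ with $\tilde f=f$ on $B_1$, $\operatorname{supp}\tilde f\subset B_2$ and $\|\tilde f\|_{h^1(\mathbb{R}^2)}\le C\|f\|_{h^1(B_1)}$. Fix $\chi\in C^\infty_0(B_2)$ with $\int\chi=1$, put $c=\int_{\mathbb{R}^2}\tilde f$ and $g=\tilde f-c\chi$. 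Then $g$ is compactly supported, has vanishing integral, and, since $\chi\in h^1(\mathbb{R}^2)$, lies in $h^1(\mathbb{R}^2)$; a compactly supported $h^1(\mathbb{R}^2)$-function with mean zero belongs to the global Hardy space $\mathcal{H}^1(\mathbb{R}^2)$, and $\|g\|_{\mathcal{H}^1(\mathbb{R}^2)}\le C(\|\tilde f\|_{h^1(\mathbb{R}^2)}+|c|)\le C\|f\|_{h^1(B_1)}$ (note $|c|\le\|\tilde f\|_{L^1}\le\|\tilde f\|_{h^1}$).

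Next, let $N(x)=\tfrac{1}{2\pi}\log|x|$ and $w=N\ast g$. Since $g$ is compactly supported with mean zero, $w$ is a well-defined function decaying at infinity and $\Delta w=g$ in $\mathbb{R}^2$. The second derivatives satisfy $\nabla^2 w=Tg$ with $T$ a matrix of second-order Riesz transforms, which are bounded on $\mathcal{H}^1(\mathbb{R}^2)$; hence $\nabla^2 w\in\mathcal{H}^1(\mathbb{R}^2)\subset L^1(\mathbb{R}^2)$ with $\|\nabla^2 w\|_{L^1(\mathbb{R}^2)}\le C\|g\|_{\mathcal{H}^1(\mathbb{R}^2)}$. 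Together with $\|w\|_{L^1(B_2)}+\|\nabla w\|_{L^1(B_2)}\le C\|g\|_{L^1(\mathbb{R}^2)}$ (Young's inequality, since $N,\nabla N\in L^1_{loc}(\mathbb{R}^2)$) this gives $w\in W^{2,1}(B_2)$, which in dimension two embeds continuously into $C^0(\overline{B_1})$ and into $W^{1,2}(B_1)$; therefore $\|w\|_{L^\infty(B_1)}+\|\nabla w\|_{L^2(B_1)}\le C\|g\|_{\mathcal{H}^1(\mathbb{R}^2)}\le C\|f\|_{h^1(B_1)}$. Finally $w_0:=c\,(N\ast\chi)\in C^\infty(\mathbb{R}^2)$ solves $\Delta w_0=c\chi$ and trivially satisfies $\|w_0\|_{L^\infty(B_1)}+\|\nabla w_0\|_{L^2(B_1)}\le C|c|\le C\|f\|_{h^1(B_1)}$, so $\tilde\psi:=w+w_0\in C^0(\overline{B_1})\cap W^{1,2}(B_1)$ solves $\Delta\tilde\psi=g+c\chi=\tilde f=f$ in $B_1$ with the desired norm bound.

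It remains to fix the boundary condition. Let $h$ be the harmonic function in $B_1$ with boundary values $h=\tilde\psi|_{\partial B_1}\in C^0(\partial B_1)$; then $h\in C^0(\overline{B_1})\cap C^\infty(B_1)$, the maximum principle gives $\|h\|_{L^\infty(B_1)}\le\|\tilde\psi\|_{L^\infty(B_1)}$, and since $h$ minimizes the Dirichlet energy among $W^{1,2}(B_1)$-functions with trace $\tilde\psi|_{\partial B_1}$ we get $\|\nabla h\|_{L^2(B_1)}\le\|\nabla\tilde\psi\|_{L^2(B_1)}$. Thus $\psi:=\tilde\psi-h\in L^\infty\cap W^{1,2}_0(B_1)$ solves $\Delta\psi=f$ in $B_1$, $\psi=0$ on $\partial B_1$, and $\|\psi\|_{L^\infty(B_1)}+\|\nabla\psi\|_{L^2(B_1)}\le C\|f\|_{h^1(B_1)}$, which is \eqref{psii1}. (The sign hypothesis $f\ge 0$ plays no role here; it is recorded only because it is what is used in the application, cf.\ Lemma \ref{GradEst1}.)

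The main point — and the only place the Hardy‑space hypothesis enters — is the Calder\'on--Zygmund step $\nabla^2 w\in\mathcal{H}^1(\mathbb{R}^2)\subset L^1$ combined with the two-dimensional borderline embedding $W^{2,1}\hookrightarrow C^0\cap W^{1,2}$; the rest is bookkeeping (reducing $h^1$ to $\mathcal{H}^1$ by removing the mean, and the harmonic boundary correction). An alternative, slightly slicker route keeps everything on $B_1$: write $\psi(x)=\int_{B_1}G_{B_1}(x,y)f(y)\,dy$ with $G_{B_1}$ the Green's function of the disk, deduce $|\psi(x)|\le\|G_{B_1}(x,\cdot)\|_{\operatorname{bmo}(B_1)}\|f\|_{h^1(B_1)}\le C\|f\|_{h^1(B_1)}$ from the $h^1$--$\operatorname{bmo}$ duality (\cite{Mi},\cite{CKS}) together with a uniform local-BMO bound on the logarithmic Green's kernel, and then obtain $\int_{B_1}|\nabla\psi|^2=-\int_{B_1}\psi f\le\|\psi\|_{L^\infty(B_1)}\|f\|_{L^1(B_1)}\le\|\psi\|_{L^\infty(B_1)}\|f\|_{h^1(B_1)}$ by integration by parts (justified by regularizing $f$); in that approach the work shifts to verifying $\sup_{x\in B_1}\|G_{B_1}(x,\cdot)\|_{\operatorname{bmo}(B_1)}<\infty$.
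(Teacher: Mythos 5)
Your argument is essentially correct, but it follows a genuinely different route from the paper. You reduce everything to the global theory: extend $f$ to $\tilde f\in h^1(\mathbb{R}^2)$ with support and norm control, subtract $c\chi$ to pass to a mean-zero, compactly supported function in $\mathcal{H}^1(\mathbb{R}^2)$, solve by the Newtonian potential, use boundedness of second-order Riesz transforms on $\mathcal{H}^1$ to get $\nabla^2w\in L^1$, invoke the borderline embedding $W^{2,1}\hookrightarrow C^0\cap W^{1,2}$ in two dimensions, and finish with a harmonic boundary correction via the maximum principle and Dirichlet minimization. The paper instead never leaves the disk: it writes $\psi$ with the explicit Green's function of $B_1$, builds the dyadic sum of bumps $l_x(y)=\sum_j\theta(2^j(1-|x|)^{-1}(x-y))$, shows the Green kernel equals $-l_x(y)\ln 2$ up to a bounded error, and bounds $\int_{B_1}f\,l_x$ directly by the radial maximal function $f^{\ast}$ from Definition \ref{10003} (here $f\ge 0$ is used to compare the bump averages with $\phi_t\ast f$), then gets $\|\nabla\psi\|_{L^2}$ by integration by parts. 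What your route buys: brevity modulo citations, no use of the sign of $f$, and the extra information $\psi\in W^{2,1}_{loc}$; what it costs: every key step is outsourced — in particular the very first step (extension from the maximal-function space $h^1(B_1)$ of Definition \ref{10003} to $h^1(\mathbb{R}^2)$, plus the cutoff-multiplication and compact-support-mean-zero-implies-$\mathcal{H}^1$ facts) is exactly the nontrivial identification of \cite{Mi} and \cite{CKS} that the paper's appendix is written to avoid, since its stated purpose is a self-contained proof using only the single test function in the definition of $h^1(B_1)$. Your alternative sketch via $h^1$--$\operatorname{bmo}$ duality is closer in spirit to the paper's kernel estimate, but as you note the burden then shifts to a uniform $\operatorname{bmo}$ bound for $G_{B_1}(x,\cdot)$, which is essentially what the paper's estimate \eqref{Lest2} establishes by hand.
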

\begin{proof}
The idea of the proof follows \cite[Proposition 1.68]{Sem}. Since the Green's
function of $\Delta$ on $B_1$ is given by $\frac{1}{2\pi}\ln|x|$, for $x\in B_1$, we can write
\begin{align}\label{psicon}
\psi(x) &= \frac{1}{2\pi}\int_{B_1} f(y) \left(\ln|x-y| - \ln\left(\left|\frac{x}{|x|} - |x|y\right|\right)\right)dy\,.
\end{align}

Let $\theta\in C^{\infty}_0(B_1)$ be a smooth bump function such that $0\leq \theta \leq 1$, $\theta=1$ in $B_{\frac{1}{16}}$ and $\text{spt}(\theta)\subset B_{\frac{1}{8}}$. For $x\in B_1$ we define
\begin{equation}\label{lfunc}
l_x(y) = \sum_{j=0}^{\infty}\theta\left(2^j(1-|x|^{-1})(x-y)\right) \quad \text{for }y\in B_1\,.
\end{equation}
We claim that for any $x,y\in B_1$
\begin{equation}\label{Lest2}
-20\ln2 \leq \ln|x-y| - \ln\left(\left|\frac{x}{|x|} - |x|y\right|\right) + l_x(y)\ln2  \leq 20\ln 2\,,
 \end{equation}
To see this, it is clear that for $x,y\in B_1$ such that
\begin{equation}\label{Temp-9090-2}
2^{-k}\leq |x-y|\leq 2^{-k+1}, \quad k=\N_0
\end{equation}
we have
\begin{equation}\label{theta4409909}
-k\ln 2\,\leq \ln|x-y|\,\leq (-k+1)\ln 2\,.
\end{equation}
Now note that
$$
1-|x|-|x-y| \leq 1-|x|+|x|-|y|= 1-|y| \leq 1-|x|+|x-y|\,,
$$
and therefore for $x\in B_{1-2^{-i-1}} \setminus  B_{1-2^{-i}}$, i.e., $1-|x| \in [2^{-i-1},2^{-i}], i=\N_0$ (with $\bar{B}_0 = \emptyset$) and any $y\in B_1$ satisfying \eqref{Temp-9090-2}, we have
$$
1-|y| \,\in\,
\left\{
   \begin{aligned}
     &\left[2^{-i-1}-2^{-k+1},2^{-i}+2^{-k+1}\right]\quad &&\text{if } k\geq i+4;\\
    &\left[0,2^{-i}+2^{-k+1}\right]\quad &&\text{if } k\leq i+3.\\
   \end{aligned}
 \right.
$$
We also have
\begin{align}\label{TEMO2000009}
0 \leq (1-|x|)(1-|y|) &\leq (1-|x|^2)(1-|y|^2) \notag\\
   &= \left|\frac{x}{|x|}- |x|y\right|^2 - |x-y|^2 \leq 2^2(1-|x|)(1-|y|)\,,
\end{align}
and thus
$$
\left|\frac{x}{|x|}- |x|y\right|^2 - |x-y|^2 \in
\left\{
   \begin{aligned}
     &\left[2^{-2i-2}-2^{-i-k},2^{-2i+2}+2^{-i-k+3}\right]\quad &&\text{if } k\geq i+4;\\
    &\left[0,2^{-2i+2}+2^{-i-k+3}\right]\quad &&\text{if } k\leq i+3.\\
   \end{aligned}
 \right.
$$
Combining this with \eqref{Temp-9090-2} we get
$$
\left|\frac{x}{|x|}- |x|y\right|^2 \in\left\{
   \begin{aligned}
     &\left[2^{-2i-2}-2^{-i-k} + 2^{-2k},2^{-2i+2}+2^{-i-k+3}+2^{-2k+2} \right] \text{if } k\geq i+4;\\
    &\left[2^{-2k},2^{-2i+2}+2^{-i-k+3}+2^{-2k+2}\right]\,\ \text{if } k\leq i+3.\\
   \end{aligned}
 \right.
$$

Now using the facts that for $k\geq i+4$ we have
$$
2^{-2i-2}-2^{-i-k} + 2^{-2k} \geq 2^{-2i-4} \quad \text{and}\quad 2^{-2i+2}+2^{-i-k+3}+2^{-2k+2}  \leq 2^{-2i+4}
$$
and for $k \leq i+3$ we have
$$
2^{-2i+2}+2^{-i-k+3}+2^{-2k+2}  \leq 2^{-2k+10}\,,
$$
we arrive at
$$
\left|\frac{x}{|x|}- |x|y\right|^2 \,\in\, \left\{
   \begin{aligned}
     &\left[2^{-2i-4},2^{-2i+4}\right]\quad &&\text{if } k\geq i+4;\\
    &\left[2^{-2k},2^{-2k+10}\right]\quad &&\text{if } k\leq i+3,\\
   \end{aligned}
 \right.
$$
and hence
\begin{equation}\label{Lfunction1}
-\ln \left|\frac{x}{|x|}- |x|y\right|\, \in \,\left\{
   \begin{aligned}
     &[(i-2)\ln2, (i+2)\ln2]\quad &&\text{if } k\geq i+4;\\
    &[(k-5)\ln2, k\ln2]\quad &&\text{if } k\leq i+3.\\
   \end{aligned}
 \right.
\end{equation}
Combining \eqref{theta4409909} and \eqref{Lfunction1} we get
\begin{equation}\label{Lfunction1-1-1}
\ln|x-y| - \ln\left(\left|\frac{x}{|x|} - |x|y\right|\right) \in \left\{
   \begin{aligned}
     &[(-k+i-2)\ln2, (-k+i+3)\ln2]\quad &&\text{if } k\geq i+4;\\
    &[-5\ln2, \ln 2] \quad(\text{in fact, }\,[-5\ln2, 0])\quad &&\text{if } k\leq i+3,\\
   \end{aligned}
 \right.
\end{equation}
for any $x\in B_{1-2^{-i-1}} \setminus  B_{1-2^{-i}}, i\geq 0$, and any $y\in B_1$ satisfying \eqref{Temp-9090-2} for some $k\geq 0$.

Now for any $x\in B_{1-2^{-i-1}} \setminus  B_{1-2^{-i}}, i\geq 0$, and any $y\in B_1$ satisfying \eqref{Temp-9090-2}, since $0\leq \theta \leq 1$, $\theta=1$ in $B_{\frac{1}{16}}$ and $\text{spt}(\theta)\subset B_{\frac{1}{8}}$, we get that for any $j\geq 0$
$$
\theta\left(2^j(1-|x|)^{-1}(x-y)\right)= 0  \quad \text{for}\quad  |x-y|\ge  2^{-j-3}(1-|x|) \in [2^{-j-i-4}, 2^{-j-i-3}]
$$
and
$$
\theta\left(2^j(1-|x|)^{-1}(x-y)\right) = 1 \quad \text{for}\quad  |x-y|\leq 2^{-j-4}(1-|x|)\in [2^{-j-i-5}, 2^{-j-i-4}]\,.
$$
Therefore (combining with \eqref{Temp-9090-2}),
\begin{equation}\label{theta229090}
\theta\left(2^j(1-|x|)^{-1}(x-y)\right)= 0  \quad \text{for}\quad j \geq k-i-3
\end{equation}
and
\begin{equation}\label{theta110909}
\theta\left(2^j(1-|x|)^{-1}(x-y)\right)= 1  \quad \text{if}\quad k-1 \geq j+ i+5 \quad(\text{i.e. }\, j\leq k-i-6)\,.
\end{equation}

Hence for any $x\in B_{1-2^{-i-1}} \setminus  B_{1-2^{-i}}, i\geq 0$ and any $y\in B_1$ such that $2^{-k}\leq |x-y|\leq 2^{-k+1}$ for some $k=0,1,2,...,$ \eqref{lfunc}, \eqref{theta229090} and \eqref{theta110909} imply
\begin{equation}\label{theta559999999}
\left\{
   \begin{aligned}
     &k-i - 10\,\leq\,l_x(y)\,\leq\, k-i+10 \quad &&\text{if } k\geq i+4;\\
    & l_x(y)=0 \quad &&\text{if } k\leq i+3\,.\\
   \end{aligned}
 \right.
\end{equation}
Combining \eqref{Lfunction1-1-1} and \eqref{theta559999999} gives \eqref{Lest2}.

Therefore, in order to obtain the $L^\infty$-bound of $\psi$ on $B_1$ as in \eqref{psii1}, it suffices to bound $\int_{B_1} f(y) l_x(y) dy$ since we have \eqref{psicon}, \eqref{Lest2} and $\|f\|_{L^1(B_1)}\leq \|f\|_{h^1(B_1)}$.

In order to bound $\int_{B_1} f(y) l_x(y) dy$, we next claim that for any $x\in B_1$, $j\geq 0$ and $z\in B_{2^{-j-4}(1-|x|)}(x)$ we have
\begin{equation}\label{TempLL1}
\int_{B_1} f(y) 2^{2j+2}(1-|x|)^{-2}\theta\left(2^j(1-|x|)^{-1}(x-y)\right)dy \leq \int_{B_t(z)}\frac{1}{t^2}\phi\left(\frac{z-y}{t}\right)f(y)dy\,,
\end{equation}
where
$$t = 2^{-j-1}(1-|x|)$$
and $\phi$ is a nonnegative Schwartz function as in \eqref{funcphi}. To see \eqref{TempLL1}, we first note that since $\text{spt}(\theta)\subset B_{\frac{1}{8}}$, we have for any $x\in B_1$ and $j\geq 0$
\begin{align}\label{TempLL2}
&\int_{B_1} f(y) 2^{2j+2}(1-|x|)^{-2}\theta\left(2^j(1-|x|)^{-1}(x-y)\right)dy \notag\\
= &\int_{B_{2^{-j-3}(1-|x|)}(x)} f(y)2^{2j+2}(1-|x|)^{-2}\theta\left(2^j(1-|x|)^{-1}(x-y)\right)dy\,.
\end{align}
Now since $\frac{3}{8}(2^{-j-1}) = 2^{-j-4} + 2^{-j-3}$ and $2^{-j-4} + 2^{-j-1} = \frac{9}{16}2^{-j}<1$ for any $j\geq 0$, we see that for any $z\in B_{2^{-j-4}(1-|x|)}(x)$
\begin{equation}\label{TEMP2014}
B_{2^{-j-3}(1-|x|)}(x)\subseteq B_{\frac{3t}{8}}(z) \subset B_t(z)= B_{2^{-j-1}(1-|x|)}(z)\subseteq B_1\,.
\end{equation}
Using the facts that $f\geq 0, \,0\leq \theta \leq 1, \,\phi\geq0$ and $\phi = 2$ on $B_{\frac{3}{8}}$ we conclude
\begin{align*}
&\int_{B_{2^{-j-3}(1-|x|)}(x)} f(y) 2^{2j+2}(1-|x|)^{-2}\theta(2^j(1-|x|)^{-1}(x-y))dy\\
\leq &\int_{B_{2^{-j-3}(1-|x|)}(x)} f(y) 2^{2j+2}(1-|x|)^{-2}dy\leq \int_{B_{\frac{3t}{8}}(z)} f(y) 2^{2j+2}(1-|x|)^{-2}dy \\
\leq &\int_{ B_t(z)} f(y) 2^{2j+2}(1-|x|)^{-2} \phi\left(\frac{z-y}{t}\right)dy = \int_{B_t(z)}\frac{1}{t^2}\phi\left(\frac{z-y}{t}\right)f(y)dy\,.
\end{align*}
Combining this with \eqref{TempLL2} gives \eqref{TempLL1}. Therefore by \eqref{TempLL1} and the definition \eqref{Maxima} of the radial maximal function $f^{\ast}$, for any $x\in B_1$ and $j\geq 0$ we have
$$
\left| \int_{B_1} f(y) \theta(2^j(1-|x|)^{-1}(x-y))dy\right| \leq 2^{-2j-2}(1-|x|)^{2} \inf_{z\in B_{2^{-j-4}(1-|x|)}(x)} f^{\ast}(z)\,.
$$

Therefore, by \eqref{lfunc}, for any $x\in B_1$ we have
\begin{align}\label{FINALLLY}
&\left| \int_{B_1} f(y) l_x(y)dy\right| \leq \sum_{j=0}^{\infty}\left| \int_{B_1} f(y) \theta(2^j(1-|x|)^{-1}(x-y))dy\right|\notag\\
\leq \,& \sum_{j=0}^{\infty} 2^{-2j-2} (1-|x|)^{2}\inf_{z\in B_{2^{-j-4}(1-|x|)}(x)} f^{\ast}(z)\notag\\
 \leq \,& \frac{2^8}{3\pi}\sum_{j=0}^{\infty} \int_{ B_{2^{-j-4}(1-|x|)}(x) \setminus  B_{2^{-j-5}(1-|x|)}(x)} f^{\ast} (z)dz\\
\leq \,& \frac{2^8}{3\pi}\int_{B_1} f^{\ast}(z)dz \,\leq \,\frac{2^8}{3\pi}\|f\|_{h^1(B_1)}\,.\notag
\end{align}
Combining \eqref{TEMO2000009}, \eqref{Lest2} and \eqref{FINALLLY} yields (using $\|f\|_{L^1(B_1)}\leq \|f\|_{h^1(B_1)}$)
$$
|\psi(x)| = -\frac{1}{2\pi}\int_{B_1} f(y) \left(\ln|x-y| - \ln\left(\left|\frac{x}{|x|} - |x|y\right|\right)\right)dy \leq  C\|f\|_{h^1(B_1)}\,.
$$
This gives the desired $L^{\infty}$-bound of $\psi$ on $B_1$. The $L^2$-estimate for $\nabla \psi$ simply follows from an integration by parts argument as in the proof of Wentes's lemma.
\end{proof}

\end{document}